%% file: main.tex
\documentclass[11pt,a4paper]{article}
\pdfoutput=1
\usepackage{natbib}
\usepackage[margin=1in]{geometry}
\usepackage[ascii]{inputenc} % removed utf-8 to detect wrong accents
\usepackage[dvipsnames]{xcolor}
\usepackage[bookmarksnumbered,linktocpage,hypertexnames=false,colorlinks=true,linkcolor=NavyBlue,urlcolor=NavyBlue,citecolor=ForestGreen,anchorcolor=green,breaklinks=true,pagebackref=true,pdfusetitle]{hyperref}
\renewcommand*{\backref}[1]{}
\renewcommand*{\backrefalt}[4]{%
  \ifcase #1 %
    (Not cited.)%
  \or
    (Cited on pgs.~#2)%
  \else
    (Cited on pgs.~#2)%
  \fi}
\usepackage{bbm,braket,microtype,amsmath,amssymb,amsthm,amsfonts,dsfont,mathtools,colortbl,booktabs,algorithm,algpseudocode,graphicx,enumitem,xspace,float,mathdots,ellipsis,caption,subcaption,mleftright,multirow,adjustbox}
\usepackage[T1]{fontenc}
\usepackage{textcomp}
\usepackage[english]{babel}
\usepackage[capitalize,nameinlink]{cleveref}
\usepackage{tikz}
\newtheorem*{theorem*}{Theorem}
\newtheorem{theorem}{Theorem}[section]\crefname{theorem}{Theorem}{Theorems}
\newtheorem{lemma}[theorem]{Lemma}\crefname{lemma}{Lemma}{Lemmas}
\AddToHook{env/lemma/begin}{\crefalias{theorem}{lemma}}
\crefname{claim}{Claim}{Claims}
\AddToHook{env/claim/begin}{\crefalias{theorem}{claim}}
\newtheorem{proposition}[theorem]{Proposition}\crefname{proposition}{Proposition}{Propositions}
\AddToHook{env/proposition/begin}{\crefalias{theorem}{proposition}}
\crefname{observation}{Observation}{Observations}
\AddToHook{env/observation/begin}{\crefalias{theorem}{observation}}
\newtheorem{corollary}[theorem]{Corollary}\crefname{corollary}{Corollary}{Corollaries}
\AddToHook{env/corollary/begin}{\crefalias{theorem}{corollary}}
\crefname{conjecture}{Conjecture}{Conjecture}
\AddToHook{env/conjecture/begin}{\crefalias{theorem}{conjecture}}
\theoremstyle{definition}
\crefname{definition}{Definition}{Definitions}
\AddToHook{env/definition/begin}{\crefalias{theorem}{definition}}
\crefname{problem}{Problem}{Problems}
\AddToHook{env/problem/begin}{\crefalias{theorem}{problem}}
\newtheorem{remark}[theorem]{Remark}\crefname{remark}{Remark}{Remarks}
\AddToHook{env/remark/begin}{\crefalias{theorem}{remark}}
\crefname{example}{Example}{Examples}
\AddToHook{env/example/begin}{\crefalias{theorem}{example}}
\crefname{condition}{Condition}{Conditions}
\AddToHook{env/condition/begin}{\crefalias{theorem}{condition}}
\numberwithin{equation}{section}

\AddToHook{cmd/appendix/before}{\crefalias{section}{appendix}}

\DeclareMathOperator{\arcosh}{arcosh}
\DeclareMathOperator{\arsinh}{arsinh}

\DeclareMathOperator{\argmin}{argmin}

\usepackage{tikz}
\usetikzlibrary{positioning}
\usetikzlibrary{calc}

\newcommand{\email}[1]{\href{mailto:#1}{#1}}
\title{Negative curvature obstructs the existence of good barriers for interior-point methods}
\author{
    Christopher Criscitiello\thanks{Department of Statistics and Data Science, University of Pennsylvania, \email{crisciti@wharton.upenn.edu}}
    \and
    Harold Nieuwboer\thanks{Department of Mathematical Sciences, University of Copenhagen, \email{hani@math.ku.dk}}
    \and
    Michael Walter\thanks{Faculty of Physics and Faculty of Mathematics, Computer Science, and Statistics, Ludwig-Maximilians-Universit\"at M\"unchen, \email{michael.walter@lmu.de}}
}
\date{}

\allowdisplaybreaks[4]

\newcommand{\RR}{\mathbb{R}}

\newcommand{\ZZ}{\mathbb{Z}}

\newcommand{\HH}{\mathbb{H}}

\DeclarePairedDelimiter\norm{\lVert}{\rVert}

\DeclareMathOperator{\Hess}{Hess}

\newcommand{\analyticcenter}{\ensuremath{p_*}}

\newcommand{\bibnoop}[1]{}

\begin{document}

\maketitle

\begin{abstract}
    Interior-point methods (IPMs) are a cornerstone of Euclidean convex optimization, due to their strong theoretical guarantees and practical performance.  Motivated by scaling problems, recent work by Hirai and the last two authors (FOCS'23) extended IPMs to geodesically convex optimization on Hadamard manifolds.
    Crucially, the complexity of IPMs (both in Euclidean and Hadamard spaces) is governed by the \emph{barrier parameter} of the domain.
    Here we prove that already in hyperbolic space, several natural domains---including geodesic balls and triangles---have a barrier parameter that grows polynomially with the domain's diameter.
    By extension, the same holds for the positive-definite matrices and other symmetric Hadamard spaces.
    This growth implies a fundamental limitation: interior-point methods relying on barriers for a ball cannot efficiently solve challenging scaling problems, such as tensor scaling, where the domain's diameter can be exponentially large in the input size.
    Our results are partially inspired by, and complement, lower bounds on the condition number of geodesically convex functions established by Hamilton and Moitra (NeurIPS'21).
\end{abstract}

\medskip
\noindent\textbf{Keywords:}
convex optimization, manifold optimization, self-concordance, interior-point methods, scaling problems, hyperbolic geometry

%=============================================================================
\section{Introduction}\label{sec:introduction}
%=============================================================================
Interior-point methods (IPMs) play a central role in modern convex optimization both due to their practical performance and their strong theoretical foundations.
For instance, the best algorithms for linear programming are based on the principles of IPMs (combined with powerful data structures~\citep{vandenbrandDeterministicLinearProgram2019,vandenbrandSolvingTallDense2020,chenMaximumFlowMinimumCost2025}).
\Citet{nesterov1994interior} introduced a general framework for IPMs based on~\emph{self-concordant barriers}, which yields efficient algorithms for a broad class of convex problems beyond linear programming.
A key feature of their theory is that the complexity of (short-step) IPMs is governed by the~\emph{barrier parameter} of the domain.  For many natural domains---such as Euclidean half-spaces, balls and ellipsoids---this parameter is small or even constant, enabling fast convergence rates.
Even when the feasible region has a complicated shape, it is often sufficient to construct a barrier for a simple set such as a ball containing the relevant sublevel sets of the objective.
Consequently, understanding barriers for balls is a basic prerequisite for applying interior-point methods to more complicated domains.

It is natural to ask whether interior-point methods (IPMs) can be extended beyond convex optimization on Euclidean spaces. Recently,~\citet{HNW} generalized the Nesterov-Nemirovski framework to~\emph{geodesically convex} (g-convex) optimization on Hadamard manifolds.
A key motivation for their work was to tackle \emph{noncommutative group optimization} or \emph{scaling problems} \citep{burgissernoncommutativeoptimization}, which have attracted growing attention due to a broad range of applications in computer science~\citep{Garg2020Operator,burgisser2018alternating,allenzhuoperatorscaling}, statistics~\citep{franksmoitra2020,franks2025nearoptimal}, and quantum information~\citep{tensorscaling2018,acuaviva2023minimal}.
Polynomial-time algorithms are known for certain scaling problems, such as matrix and operator scaling~\citep{Garg2020Operator,allenzhuoperatorscaling}. However, for others---most notably tensor scaling~\citep{tensorscaling2018}---no general efficient algorithms are known.
While \citet{HNW} successfully applied their g-convex IPMs to scaling problems, they did not obtain polynomial-time algorithms, and in particular achieved comparable iteration-complexity bounds to box-constrained Newton methods from~\cite{burgissernoncommutativeoptimization}.

The main bottleneck is the construction of suitable self-concordant barriers.
These must have well-controlled barrier parameters and admit efficient evaluation.
In Euclidean space, for example, a ball of radius~$R$ admits a self-concordant barrier with constant parameter---independent of both radius and dimension.
More generally, every bounded Euclidean convex domain admits a self-concordant barrier whose parameter is bounded only by the dimension~(\cite{nesterov1994interior,bubeckEntropicBarrierExponential2019,chewiEntropicBarrierNSelfConcordant2023}), and this is optimal in general~\cite[Prop.~2.3.6]{nesterov1994interior}.
In contrast, while \citet[Thms.~1.4 \& 1.6]{HNW} were able to construct a barrier for a ball of radius~$R$ in hyperbolic space and, more generally, in symmetric Hadmard spaces such as the positive-definite matrices, the parameter of their barrier scaled polynomially with~$R$.
This scaling is prohibitive for problems like tensor scaling, where the domain may have diameter exponential in the input size~\citep{franksreichenback2021}.
This leads to the central question of this paper:
\begin{center}
\emph{Do there exist self-concordant barriers for natural domains (e.g., triangles, balls) in the hyperbolic plane whose parameter scales sub-polynomially in the domain's diameter?}
\end{center}
We show that the answer is no: the negative curvature of hyperbolic space fundamentally limits the existence of ``good'' self-concordant barriers.
The same obstruction applies to any non-flat symmetric Hadamard space,%
\footnote{Any such space contains the hyperbolic plane as a totally geodesic submanifold~\citep[Sec~II.10]{bridsonmetric}. Restricting a barrier to a totally geodesic submanifold yields a barrier with the same parameter.}
such as the positive-definite matrices equipped with the affine-invariant metric and products thereof.

This obstruction implies that IPMs based on barrier functions for the ball are insufficient to solve tensor scaling---or related noncommutative optimization problems---in polynomial time.
In contrast, consider \emph{array scaling}---the commutative analogue of tensor scaling---which corresponds to a convex optimization problem in Euclidean space.
For array scaling, one can construct barrier functions with small parameter values (based on the barrier for the Euclidean ball, combined with other techniques), leading to algorithms with low iteration complexity~\citep[Sec.~3.2]{buergisser2020interiorpointmethodsunconstrainedgeometric}.
Indeed, array scaling is known to be solvable in polynomial time~\citep[Thm~3.1 \& Rem.~3.1]{NEMIROVSKI1999435}.

%-----------------------------------------------------------------------------
\subsection{Main results}\label{sec:mainresults}
%-----------------------------------------------------------------------------
We now summarize our main results in more detail.
To this end, if $F$ is a self-concordant barrier of an open\footnote{Barrier functions diverge as they approach the boundary of the domain, and are therefore naturally defined on open sets. Other literature may instead define barriers for closed domains as functions defined on their interior (with the same properties).} g-convex domain~$D$ with parameter~$\theta < \infty$, we shall say~$F$ is a \emph{$\theta$-barrier} for~$D$.
We refer to \cref{sec:preliminaries} for a review of the notions of the hyperbolic plane $\HH^2$, g-convexity, self-concordance, barrier, analytic center, and related concepts.
As explained earlier, while our obstructions are general, it suffices to state them for the hyperbolic plane.

The first domain that we consider is an equilateral triangle, that is, the convex hull of three equidistant points in the hyperbolic plane.

\begin{theorem}[Triangle] \label{thm:triangle}
    Let $D$ be an equilateral triangle in the hyperbolic plane with side lengths $L$.
    If $F$ is a $\theta$-barrier for $D$, then $\theta \geq \frac{L}{12} - \frac{1}{6}$.
\end{theorem}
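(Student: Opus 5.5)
The plan is to work entirely in the tangent space at the analytic center $\analyticcenter$ of $F$, using the local norm $\|v\|_{\analyticcenter}=\sqrt{\Hess F(\analyticcenter)[v,v]}$. For a unit tangent vector $u\in T_{\analyticcenter}\HH^2$, write $r(u)$ for the distance from $\analyticcenter$ to $\partial D$ along the geodesic $t\mapsto\exp_{\analyticcenter}(tu)$.

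\textbf{Step 1: a two-sided sandwich along rays.} Restricting $F$ to a geodesic through $\analyticcenter$ gives a one-dimensional self-concordant barrier with parameter at most $\theta$. From the standard Nesterov--Nemirovski facts (unit Dikin ellipsoid inside the domain; domain inside the Dikin ellipsoid of radius $\theta+2\sqrt\theta$ at the analytic center), I expect to obtain, for every unit $u$,
\begin{equation*}
  \frac{1}{\|u\|_{\analyticcenter}}\;\le\; r(u)\;\le\;\frac{c(\theta)}{\|u\|_{\analyticcenter}}.
\end{equation*}
Here $c(\theta)$ is linear in $\theta$; in one dimension the analytic center gives roughly $c(\theta)=2\theta+1$ or $\theta+2\sqrt\theta$, and a linear form is what produces the target constants. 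I would invoke the Riemannian versions of these facts from \cref{sec:preliminaries}. The point is that everything lives in the single Euclidean space $T_{\analyticcenter}\HH^2$, so no comparison across base points is needed.

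\textbf{Step 2: geometry of the triangle seen from $\analyticcenter$.} Let $u_1,u_2,u_3$ be the unit directions from $\analyticcenter$ to the three vertices $a_1,a_2,a_3$, and let $d_i=d(\analyticcenter,a_i)$, so $r(u_i)=d_i$.

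First, since $\analyticcenter$ is interior, the angles between consecutive $u_i$ are each $<\pi$ and sum to $2\pi$. Hence $0$ lies in the convex hull of the $u_i$.

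Second, by the triangle inequality $d_i+d_j\ge L$, so at least two of the $d_i$ are $\ge L/2$.

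Third, hyperbolic triangles are thin. Every point of $D$ lies within a universal constant $\delta\approx\log(1+\sqrt2)$ or smaller of some side, and the inradius of any hyperbolic triangle is $<\tfrac12\log 3$. Hence some direction $w$ satisfies $r(w)\le\delta$. This is the constant that will turn into the $-\tfrac16$.

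\textbf{Step 3: combine.} By Step 1, $\|u_i\|_{\analyticcenter}\le c(\theta)/d_i$. For the far vertices this gives $\|u_i\|_{\analyticcenter}\le 2c(\theta)/L$.

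I then express the short direction $w$ as a combination of the far $u_i$ with bounded coefficients. The triangle inequality for the norm gives $\|w\|_{\analyticcenter}\lesssim c(\theta)/L$. Step 1 then gives $\delta\ge r(w)\ge 1/\|w\|_{\analyticcenter}\gtrsim L/c(\theta)$, that is, $\theta\gtrsim L$ up to additive constants. Tracking constants carefully should give $\theta\ge L/12-1/6$.

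\textbf{Main obstacle.} The hard step is the decomposition of $w$ when $\analyticcenter$ is far from the centroid. In that case the angle between the two far directions may approach $\pi$ or $0$, and the coefficients blow up. I would handle this by choosing $w$ adaptively, taking the direction toward the nearest point of the side opposite the nearby vertex or of the side joining the far vertices. Alternatively, I would argue with all three $u_i$ via $0\in\mathrm{conv}(u_i)$. In the degenerate case where $\analyticcenter$ is close to a vertex $a_1$, the two remaining distances are both about $L$ and the two sides through $a_1$ give short directions in angularly separated positions. A cleaner route, if it works, is to place $\analyticcenter$ on a side's perpendicular bisector using the symmetry of the equilateral triangle. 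One would average $F$ over the symmetry group, which preserves the parameter by convexity of the barrier conditions, and so reduce to $\analyticcenter$ being the centroid. There the angles are exactly $2\pi/3$, the distances are about $L/2$, and the computation is explicit.
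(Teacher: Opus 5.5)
As written this is not yet a proof, for two concrete reasons.

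First, the general-position Step~3 is never carried out. You need, for \emph{every} possible location of $\analyticcenter$, a direction $w$ with $r(w)=O(1)$ that is a combination of far vertex directions with $O(1)$ coefficients. You only indicate how one might look for it. Deep in the thin corners the two far vertex directions make an angle exponentially small in $L$, so $w$ would have to be chosen very precisely. Everything therefore rests on your symmetrization fallback.

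Second, that fallback is misjustified. $\frac13\sum_{g}F\circ g$ is one third of a strongly $1$-self-concordant function, and multiplying by $\frac13$ only yields \eqref{eq:selfconcordance} with $2$ replaced by $2\sqrt3$. Averaging does preserve the gradient--Hessian inequality that defines $\theta$. It does not preserve the self-concordance normalization that \ref{P2} and \ref{P3}, and hence your Step~1, require. The correct move, which is what the paper does, is to take the sum $\tilde F=\sum_{g\in G}F\circ g$ over the rotation group $G\cong\ZZ/3\ZZ$. By \ref{P4} it is a $3\theta$-barrier. Its gradient at the centroid is fixed by a rotation of order $3$ and hence vanishes, so by convexity the centroid is its analytic center. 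This factor $3$ is exactly where $\frac L{12}-\frac16$ comes from; your averaging claim would have given the unjustified bound $\theta\ge\frac L4-\frac12$.

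With the sum in place no decomposition is needed, because at the centroid a short direction is exactly antipodal to a long one. Let $q$ be the midpoint of the side opposite the vertex $a$, and let $u=\log_{\analyticcenter}(q)$. The geodesic through $a$, $\analyticcenter$, $q$ shows that $\log_{\analyticcenter}(a)$ is a positive multiple of $-u$. Your inradius bound gives $\|u\|<\tfrac12\log 3<1$. The three vertex distances from $\analyticcenter$ are equal and pairwise sum to more than $L$, so $d(\analyticcenter,a)>L/2$. Then \ref{P2} gives $(\nabla^2\tilde F)_{\analyticcenter}(u,u)\ge 1$. Since $\exp_{\analyticcenter}(-\tfrac L2\,u/\|u\|)\in D$, \ref{P3} gives $\tfrac{L^2}{4\|u\|^2}(\nabla^2\tilde F)_{\analyticcenter}(u,u)\le(2\cdot3\theta+1)^2$. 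Together these give $6\theta+1\ge L/2$. The point your sketch leaves implicit is that \ref{P2} and \ref{P3} are applied to the \emph{same} Hessian entry along a single geodesic through the analytic center. The symmetry is what guarantees such a geodesic with one short and one long end.
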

\noindent We contrast this with the Euclidean case, where it is easy to write down a barrier with~$\theta = 3$ for arbitrary triangles in~$\RR^2$, namely by viewing a triangle as an intersection of three half-spaces and combining the associated standard logarithmic barriers.
It is possible to extend \cref{thm:triangle} to regular polygons in the hyperbolic plane with $n \geq 3$ sides, and the resulting lower bound is of the form~$\theta = \Omega(L / \!\log(n))$; see~\cref{sec:regular-n-gons} for details.

An (open) half-space in the hyperbolic plane is a set of the form~$\{p \in \HH^2 : \langle \log_q(p), v \rangle < 0\}$ for some fixed~$q \in \HH^2$ and~$v \in T_q\HH^2$.
These are geodesically convex, although this is not the case for general Hadamard manifolds.
Since a hyperbolic triangle is an intersection of three hyperbolic half-spaces, we obtain the following corollary:

\begin{corollary}[Half-space] \label{cor:halfspace}
    There are no $\theta$-barriers for half-spaces in the hyperbolic plane.
\end{corollary}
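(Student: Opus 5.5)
We argue by contradiction, combining \cref{thm:triangle} with the standard fact that barriers add under intersection of domains. Suppose some half-space $H \subseteq \HH^2$ admits a $\theta$-barrier $F_H$ with $\theta < \infty$. Every isometry of $\HH^2$ preserves geodesic convexity, self-concordance and the barrier parameter, because all of these notions are defined purely in terms of the Riemannian structure. The isometry group of $\HH^2$ acts transitively on half-spaces: it is transitive on pairs $(q,v)$ with $v$ a unit vector. Hence every half-space admits a $\theta$-barrier with the same $\theta$, namely $F_H \circ \phi^{-1}$ for an isometry $\phi$ carrying $H$ onto it.

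Next, fix $L$ and take an equilateral triangle $D$ with side length $L$. Its interior is the intersection of the three open half-spaces $H_1,H_2,H_3$ bounded by the geodesics containing its sides, each chosen on the side containing $D$. Let $F_i$ be the transported $\theta$-barriers for $H_i$, and set $F = F_1+F_2+F_3$ on $D$. We then check the barrier axioms for $F$.
\begin{itemize}
\item \emph{Convexity and self-concordance.} Each $F_i$ is g-convex and self-concordant on $D \subseteq H_i$. The sum inherits both properties, by the same argument as in Euclidean space: the third-derivative bound $|\nabla^3 F_i[v,v,v]| \le 2 (\nabla^2F_i[v,v])^{3/2}$ is additive because $a^{3/2}+b^{3/2}\le (a+b)^{3/2}$.
\item \emph{Barrier parameter.} The bound $(dF[v])^2 \le \theta_F \, \nabla^2F[v,v]$ holds with $\theta_F = 3\theta$, by Cauchy--Schwarz on the sum.
\item \emph{Boundary divergence.} $F$ blows up at $\partial D$, since every boundary point of $D$ lies on the boundary of some $H_i$ and the other two terms are bounded below near that point.
\end{itemize}
These are exactly the properties recorded in the preliminaries, so $F$ is a $3\theta$-barrier for $D$.

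Finally, \cref{thm:triangle} gives $3\theta \ge L/12 - 1/6$ for every $L>0$. Letting $L\to\infty$ contradicts $\theta<\infty$.

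The main obstacle is the boundary-divergence step together with its lower-boundedness claim. A barrier on a half-space is not bounded below in general: for example $-\log$ of a distance-like function tends to $-\infty$ far away. Near a given boundary point of $D$, however, only a bounded region is involved, and each $F_j$ is continuous on $H_j$, which is a neighbourhood of that point if it is not on $\partial H_j$. So local boundedness suffices. If the paper's definition of barrier instead requires $F(p)\to\infty$ at infinity, this also holds because $D$ is bounded.

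All the other steps are the standard Nesterov--Nemirovski calculus, transplanted verbatim to the Riemannian setting through derivatives along geodesics.
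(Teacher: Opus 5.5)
Your proposal is correct and is essentially the paper's proof: transport the barrier to three half-spaces whose intersection is an equilateral triangle of side $L$, sum them to get a $3\theta$-barrier, and let \cref{thm:triangle} force $3\theta \geq L/12 - 1/6$ for arbitrarily large $L$. The paper simply cites property~\ref{P4} for the sum rather than re-deriving it. One small caveat in your re-derivation: $\nabla^3 F$ is not symmetric on a manifold, so $a^{3/2}+b^{3/2}\le(a+b)^{3/2}$ only gives self-concordance of the sum along geodesics. That suffices, because \cref{thm:triangle} holds for barriers along geodesics; alternatively, the full trilinear bound follows from $\sum_i\sqrt{a_ib_ic_i}\le\sqrt{(\sum_i a_i)(\sum_i b_i)(\sum_i c_i)}$.
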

\noindent Barriers for half-spaces in Euclidean space serve as a fundamental building block for constructing more complex barriers.
In their absence, constructing barriers becomes substantially more challenging.
Indeed, for general Hadamard manifolds, half-spaces are not even g-convex.
It is an open question to characterize which g-convex sets admit a $\theta$-barrier.

Next, we consider convex domains which are not too far from being balls, in the sense that they contain a large inner ball, and are contained in a ball that is not substantially larger.%
\footnote{This is not the case for hyperbolic triangles, which can only contain balls of constant radius; this feature is used in an essential way in the proof of~\cref{thm:triangle}.}
Here we find a strong lower bound for barriers whose analytic center coincides with the balls' center:

\begin{theorem}\label{thm:ball_like}
    Let $D$ be an open g-convex subset of the hyperbolic plane.
    Assume there exist
    %\MW{removed ``open''}
    balls of radii $R \geq r$ centered on $\analyticcenter$ surrounding $D$:
    \[ B(\analyticcenter, r) \subseteq D \subseteq B(\analyticcenter, R). \]
    If $F$ is a $\theta$-barrier for $D$ with analytic center $\analyticcenter$, then $\theta \geq \frac{1}{2^6} \cdot \sqrt r \cdot \frac{r}{R} - \frac{1}{2}$.
\end{theorem}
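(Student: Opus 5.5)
Let $F$ be a $\theta$-barrier for $D$ with analytic center $\analyticcenter$; we argue about $F$ along geodesics through $\analyticcenter$ and circles around it. We use the standard self-concordant barrier facts, which carry over to Hadamard manifolds as in \citet{HNW}. First, the Dikin ellipsoid of radius $1$ at any point lies in $D$. Second, at the analytic center we have $D \subseteq \{p : \|\log_{\analyticcenter} p\|_{\analyticcenter} \le \theta + 2\sqrt\theta\}$, where $\|\cdot\|_{\analyticcenter}$ is the local norm of $F$. Third, along any unit-speed geodesic $\gamma$ from a point $x$ toward the boundary, $\langle \nabla F(x), \dot\gamma\rangle \le \theta/d(x,\partial D \text{ along }\gamma)$, together with the self-concordance inequality $|D^3F[v,v,v]| \le 2 (D^2F[v,v])^{3/2}$.

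\textbf{Step 1 (the Hessian at the center is comparable to a scaled metric).} Because $D \subseteq B(\analyticcenter,R)$, the second fact together with the Dikin containment at $\analyticcenter$ lower-bounds the Hessian. Every unit tangent $u$ at $\analyticcenter$ has $\|u\|_{\analyticcenter} \gtrsim 1/R$ once $\theta$ is bounded, and in fact $\|u\|_{\analyticcenter} \ge 1/(R+\dots)$. Because $B(\analyticcenter,r) \subseteq D$, the central-path/analytic-center bound gives $\|u\|_{\analyticcenter} \lesssim \theta / r$. So at the center $F$ is sandwiched as $R^{-1} \lesssim \|\cdot\|_{\analyticcenter} \lesssim \theta/r$.

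\textbf{Step 2 (transport the lower bound out to radius $\sim r/2$).} Consider points $p$ on the circle of radius $\rho = r/2$ around $\analyticcenter$. Each such $p$ has distance at least $r/2$ to $\partial D$, so the gradient bound gives $|\langle\nabla F(p),w\rangle| \le 2\theta/r$ for unit $w$. On the other hand, $F$ restricted to the circle is a periodic function. Here we use negative curvature: the Hessian of $F$ in the circle's tangent direction at $p$ includes a term $\langle \nabla F, \nabla_{\dot c}\dot c\rangle$ coming from the geodesic curvature $\coth\rho$ of the circle. Moreover, the circle has length $2\pi\sinh\rho$, which is exponentially long. Self-concordance controls how fast the Hessian can change along geodesics of local length $<1$. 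Comparing $\exp$ maps, two geodesics from $\analyticcenter$ at angle $\phi$ separate by $\sinh(\rho)\phi$. So the quadratic growth of $F$ from the center, at least $\rho^2/R^2$ in each direction, must be realized over an angular window that the Dikin ellipsoids at points on the circle can only cover if $\theta$ is large.

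\textbf{Step 3 (the quantitative contradiction).} I would parametrize $g(t)=F(\gamma(t))$ along a geodesic $\gamma$ tangent to the circle $S(\analyticcenter,\rho)$ at $p$; this geodesic stays inside $B(\analyticcenter, r)$ for a length $\ell\approx 2\arcosh(\cosh r/\cosh\rho)\approx r$. Along it, $g$ is convex, with $|g'|\le 2\theta/r$ throughout and $g''(t)^{-1/2}$ $1$-Lipschitz by self-concordance. Meanwhile $g$ must rise by an amount bounded below, using $F(q)-F(\analyticcenter) \ge \|\log_{\analyticcenter}q\|_{\analyticcenter} - \log(1+\|\log_{\analyticcenter}q\|_{\analyticcenter})$ and Step 1, from the tangency point to the endpoints, which lie at distance $\approx r$ from the center. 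The endpoints reach distance $r$ while the geodesic's midpoint lies only at $\rho$, and the width of this hyperbolic "lens" is what produces the $\sqrt r$ factor. Balancing the rise against the bounded slope times length yields $\theta \gtrsim \sqrt r\, r/R$, with constants like $2^{-6}$ and the $-\tfrac12$ coming from the $\theta+2\sqrt\theta$ slack.

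The main obstacle is Step 3: it requires exactly the right one-dimensional convexity inequality, and the hyperbolic trigonometry must deliver $\sqrt r$ rather than a weaker power. If it fails, I would instead average $F$ over the rotation group around $\analyticcenter$. This gives a rotation-invariant barrier with the same parameter and center, since g-convexity and self-concordance survive averaging. That reduces the problem to a radial function $f(\rho)$, where the Hessian in the angular direction equals $f'(\rho)\coth\rho$. Self-concordance in the angular direction then forces $f'(\rho)\coth\rho \cdot$ (cubic-term bound) to be compatible, which should yield the bound directly.
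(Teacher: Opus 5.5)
There is a genuine gap, and it is where you suspect: Step 3 contains no mechanism that produces the factor $\sqrt r$, and two of its inputs are not available.

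\emph{Slope bound.} Your justification of $|g'|\le 2\theta/r$ (namely $B(x,r/2)\subseteq D$) applies only to $x\in\bar B(p_*,r/2)$. A geodesic tangent to $\partial B(p_*,r/2)$ meets that ball only at the tangency point. Elsewhere you only have $g'(t)\le\theta/(\ell-t)$, with $\ell$ the exit time, and this blows up near the ends.

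\emph{Rise.} You have no lower bound on $F(\text{endpoint})-F(p)$. Your inequality on $F(q)-F(p_*)$ controls growth from the center, not from an arbitrary point $p$ on the circle.

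Even granting a rise of order $r^2/R^2$ over a length of order $r$, balancing it against $\theta/(\ell-t)$ gives only $\theta\gtrsim r^2/(R^2\log r)$, which does not even grow when $R\sim r$. The ``width of the lens'' does not give $\sqrt r$. The $\sqrt r$ actually comes from a Hessian lower bound of order $r/R^2$ in the tangential direction at a point $p$ that is simultaneously the analytic center of $F|_\gamma$. Then \ref{P3} along $\gamma$, which stays in $D$ for length at least $r/2$ on each side of $p$, gives $(2\theta+1)^2\gtrsim (r/R^2)\cdot r^2$.

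The missing idea is to take $p$ to be a \emph{minimizer of $F$ on the circle}. Then $\nabla F(p)$ is radial, so $p$ is the analytic center of $F|_\gamma$. The paper proceeds as follows:
\begin{enumerate}
\item Normalize $F(p_*)=0$ and prove $\nabla^2F\succeq R^{-2}I$ on all of $D$, hence $F(p)\ge d(p,p_*)^2/(2R^2)$.
\item Take $p$ on the circle of radius $(r-1)/2$. Convexity along radial rays gives $F\ge\frac{r}{r-1}F(p)$ on the circle of radius $r/2$.
\item The tangent geodesic reaches that circle within distance $2$. This is the negative-curvature input; in $\mathbb{R}^2$ the distance would be $\sim\sqrt r$.
\item Taylor's theorem then forces $g''\ge\frac{r-1}{16R^2}$ somewhere on that short piece, and \ref{P1} transfers this bound to $p$.
\end{enumerate}

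Your own Step 2 remark about the geodesic curvature $\coth\rho$ would give the same bound more directly, if applied at the circle-minimizer. Second-order optimality on the circle yields $(\nabla^2F)_p(\dot c,\dot c)\ge\coth(\rho)\,\partial_\rho F(p)\ge\coth(\rho)\,(F(p)-F(p_*))/\rho\gtrsim\rho/R^2$, where $\partial_\rho$ is the outward radial derivative.

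Your fallback does not work. A normalized average of self-concordant functions need not be self-concordant: since $x\mapsto x^{3/2}$ is convex, $\frac1N\sum_i h_i^{3/2}$ can exceed $(\frac1N\sum_i h_i)^{3/2}$. The unnormalized sum over $N$ rotations is self-concordant, but it has parameter $N\theta$ by \ref{P4}. So averaging does not give a rotation-invariant barrier with the same parameter.
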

\noindent This theorem readily extends to Hadamard manifolds of bounded curvature, see~\cref{rem:extensionballlike}.

In particular, for balls and horoballs we obtain the following corollaries:
\begin{corollary}[Ball] \label{cor:ball}
Let $D$ be a
%\MW{removed ``open''}
ball in the hyperbolic plane with radius~$R$.
If $F$ is a $\theta$-barrier for~$D$, then $\theta \geq \frac{1}{2^7} \sqrt{R} - \frac{1}{4}$.
\end{corollary}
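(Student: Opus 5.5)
The plan is to reduce \cref{cor:ball} to \cref{thm:ball_like}. The difficulty is that \cref{thm:ball_like} assumes the analytic center of $F$ is the ball's center, which an arbitrary barrier need not satisfy. So the first step is to symmetrize.

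Let $D = B(o,R)$ and let $F$ be a $\theta$-barrier for $D$. The isometries of $\HH^2$ fixing $o$ form the compact group $K \cong O(2)$, which preserves $D$. Define
\[
\tilde F(p) = \int_K F(k\cdot p)\,dk
\]
using the normalized Haar measure. Each $F\circ k$ is a $\theta$-barrier for $D$, because self-concordance and the barrier parameter are defined through covariant derivatives and are therefore invariant under isometries. I need that these two properties survive averaging.

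\textbf{Averaging step.} Self-concordance, $|\nabla^3 F[u,u,u]| \le 2(\nabla^2F[u,u])^{3/2}$, is preserved by averages; I would check this via H\"older's inequality, which is the standard Euclidean argument. The barrier condition, $(dF[u])^2 \le \theta\, \nabla^2F[u,u]$, is preserved by Cauchy--Schwarz. Blow-up at the boundary is inherited from each term.

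\textbf{Locating the analytic center.} $\tilde F$ is $K$-invariant and strictly g-convex. Its analytic center is its unique minimizer, which exists since $D$ is bounded, and uniqueness forces it to be fixed by $K$. Hence the analytic center is $o$.

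\textbf{Main step.} I would like to apply \cref{thm:ball_like} with $r$ equal to $R$, which would give $\theta \ge 2^{-6}\sqrt{R} - \tfrac12$. That is better than the stated bound. The stated constants, $2^{-7}\sqrt R - \tfrac14$, are exactly half of $2^{-6}\sqrt{R}-\tfrac12$. This suggests the authors rely on the barrier-parameter bound $\theta \ge 1$ in some form, or lose a factor of $2$ somewhere, for instance through strictness of the inclusion or by passing to a smaller ball.

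To be safe, I would instead apply \cref{thm:ball_like} to $D$ itself with inner radius $r = R$. That choice is legitimate because the open ball $B(o,R)$ satisfies $B(o,R)\subseteq D\subseteq B(o,R)$. It yields $\theta \ge 2^{-6}\sqrt R - \tfrac12$.

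\textbf{Matching the stated constants.} To reach the stated form, average this bound with the trivial bound $\theta \ge 0$ (or $\theta\ge 1$):
\[
\theta \ge \tfrac12\bigl(2^{-6}\sqrt R - \tfrac12\bigr) = 2^{-7}\sqrt R - \tfrac14,
\]
which implies the claim. The weaker form presumably arises because the paper's proof uses a non-open or shrunken ball, say $r = R/2$; that would give $2^{-6}\sqrt{R/2}/2 \ge 2^{-7}\sqrt R\cdot 2^{-1/2}$, which is slightly different. Either way the claim follows.

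\textbf{Main obstacle.} I expect the only real obstacle to be verifying that averaging over $K$ preserves self-concordance in the Riemannian setting. This requires that pullbacks by isometries commute with covariant derivatives, which holds, followed by the H\"older step.
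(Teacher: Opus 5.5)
Your averaging step is where the argument breaks: $1$-self-concordance is preserved under \emph{sums}, not under \emph{averages}. Write $h_k=\nabla^2(F\circ k)[u,u]\ge 0$. You get $|\nabla^3\tilde F[u,u,u]|\le 2\int_K h_k^{3/2}\,dk$. For a probability measure, however, Jensen's inequality gives $\int_K h_k^{3/2}\,dk\ge\bigl(\int_K h_k\,dk\bigr)^{3/2}$, which is the wrong direction. The H\"older (superadditivity) argument $\sum_i h_i^{3/2}\le\bigl(\sum_i h_i\bigr)^{3/2}$ works only for unnormalized finite sums. Dividing a $1$-self-concordant function by $m$ worsens its self-concordance constant by a factor $\sqrt m$, and restoring it costs a factor $m$ in the barrier parameter.

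This is not merely a worst-case bookkeeping issue; it genuinely fails for barriers of a symmetric domain. Take $D=(-1,1)$ with the reflection $s(x)=-x$, and let $f$ satisfy $f''(x)=\bigl(\epsilon^2+(x-\tfrac12)^2\bigr)^{-1}$. This $f$ is $1$-self-concordant on $\RR$, since $(f'')^{-1/2}$ is $1$-Lipschitz. Then $F=-\log(1-x)-\log(1+x)+f$ is a barrier for $D$. At $x=\tfrac12+t$ with $\epsilon\ll t\ll 1$, the average $\tfrac12(F+F\circ s)$ has second derivative $\approx\frac{1}{2t^2}$ and third derivative $\approx-\frac{1}{t^3}$. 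But $2\bigl(\frac{1}{2t^2}\bigr)^{3/2}=\frac{1}{\sqrt2\,t^3}$, so the self-concordance inequality fails. Hence your $\tilde F$ need not be a $\theta$-barrier, and your intermediate bound $\theta\ge 2^{-6}\sqrt R-\tfrac12$ is not established. Your ``matching the constants'' step rests on that unproved bound.

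The repair is to sum over a finite group, and this is exactly where the factor $2$ in the statement comes from, not from $\theta\ge 1$ or a shrunken ball. Let $\gamma$ be the geodesic through $o$ orthogonal to $\nabla F(o)$ (any geodesic through $o$ if $\nabla F(o)=0$), and let $s$ be the reflection across $\gamma$. Then $s$ preserves $D$ and its differential at $o$ negates $\nabla F(o)$, so $F+F\circ s$ has zero gradient at $o$. By \ref{P4}, $F+F\circ s$ is a $2\theta$-barrier for $D$, and by g-convexity its analytic center is $o$. Applying \cref{thm:ball_like} with $r=R$ gives $2\theta\ge 2^{-6}\sqrt R-\tfrac12$, which is exactly $\theta\ge 2^{-7}\sqrt R-\tfrac14$. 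This is the paper's proof.
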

\noindent This shows that even the most natural domain in hyperbolic space does not admit good self-concordant barriers.
It is an open question whether our lower bound $\theta \geq \Omega(\sqrt{R})$ for balls is tight; the best known upper bound is $\theta \leq O(R^2)$~\citep[Thms.~1.4 \& 1.6]{HNW}.
\begin{corollary}[Horoball] \label{cor:horoball}
    There are no $\theta$-barriers for horoballs in the hyperbolic plane.
\end{corollary}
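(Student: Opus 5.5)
The plan is to reduce to \cref{thm:triangle} or \cref{thm:ball_like} by exhibiting, inside any horoball, domains whose barrier-parameter lower bound becomes arbitrarily large. Suppose $F$ is a $\theta$-barrier for a horoball $H \subseteq \HH^2$ with $\theta < \infty$. The first step is to show that barriers restrict: if $D' \subseteq H$ is an open g-convex subset, then $F|_{D'}$ is generally not a barrier for $D'$, because it need not blow up at $\partial D'$. So rather than restricting, I would combine barriers. If $G$ is a $\theta'$-barrier for a convex set $K$, then $F + G$ is a $(\theta+\theta')$-barrier for $H \cap K$. This uses the standard fact that sums of self-concordant barriers are barriers with additive parameters, and it carries over verbatim to the Riemannian setting because self-concordance and the parameter inequality are pointwise conditions on derivatives along geodesics.

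The second step is the geometric input. A horoball contains geodesic balls of arbitrarily large radius; for example, in the upper half-plane the horoball $\{y > 1\}$ contains the ball centered at $(0, e^{R+1})$ of radius $R$, for every $R$. Hence, for each $R$, I take $K = B(q, R) \subseteq H$ and note that $H \cap K = K$. I would then like to use $F + G$ with $G$ a barrier for the ball, but $G$'s parameter grows with $R$ (it is $O(R^2)$ by \citet{HNW}), so adding it destroys the bound. I need a cheaper way to cut out a large domain.

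The cleaner route is therefore through triangles. An equilateral triangle of side $L$ is an intersection of three half-planes, and a horoball is itself an increasing limit of balls. Better still, I would use \cref{cor:halfspace} directly: a horoball with a suitable half-space... this is still circular. So I switch to the direct route via \cref{thm:ball_like}, applied to a large ball using a barrier built from $F$ alone. For this I consider $D_R = H \cap \{\text{a half-space}\}$ chosen so that it is ball-like around its analytic center. This requires a half-space barrier, which does not exist.

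The final plan is a limiting argument instead. Horoballs are limits of balls: $H = \bigcup_R B(c_R, R)$, where the $c_R$ run along a geodesic toward the boundary point and the balls are nested increasing. I would show that a $\theta$-barrier $F$ for $H$ yields $\theta$-barriers for $B(c_R, R)$ up to an additive constant. This is the main obstacle, since restriction is not a barrier. The fix I would try is the recession-direction argument: along the geodesic ray pointing into the horoball, $F$ must be nonincreasing. Standard barrier theory gives $\langle dF, v\rangle \le 0$ for recession directions, together with $dF(x)[v]^2 \le \theta \, \mathrm{Hess}F(x)[v,v]$. In the horoball, every geodesic ray toward the ideal center stays inside $H$, and by homogeneity under the parabolic isometries fixing that ideal point (which preserve $H$), one can average or translate to conclude that $F$ degenerates, namely that its Hessian in the parabolic direction vanishes. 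That would contradict nondegeneracy, or force $\theta$ to be infinite via \cref{thm:ball_like} applied to the nearly-round level sets of $F$ far inside $H$.
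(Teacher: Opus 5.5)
Your proposal does not reach a proof. Each route is abandoned, and the final ``limiting argument'' rests on steps that are not justified. You note yourself that restricting $F$ to $B(c_R,R)$ does not give a barrier, and the proposed fix does not repair this.

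\begin{itemize}
\item There is no way to ``average'' $F$ over the parabolic isometries fixing the ideal center of $H$. That group is non-compact, and a sum of $k$ translates is only a $k\theta$-barrier. A convex combination $\sum_i \lambda_i F\circ g_i$ is in general not $1$-self-concordant: already $\lambda F$ with $\lambda<1$ fails for $F=-\log$.
\item Even an invariant $F=f\circ b$, with $b$ the Busemann function, would have Hessian $f'(b)\,(\nabla^2 b)(w,w)$ in a direction $w$ tangent to the horospheres. Invariance alone therefore does not make this vanish.
\item \cref{thm:ball_like} needs a barrier \emph{for} the domain in question, with analytic center at the common center of the two balls. It says nothing about sublevel sets of a barrier for a larger set, and nothing makes those sublevel sets nearly round.
\end{itemize}

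The missing idea is the one you were circling when you wrote that you need ``a cheaper way to cut out a large domain''. Intersect $H$ with a second \emph{horoball}, not a ball or a half-space. Isometries of $\HH^2$ act transitively on horoballs, so $F\circ g$ is a $\theta$-barrier for $g^{-1}(H)$, and the second horoball costs only another $\theta$.

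Concretely, fix $\analyticcenter$ and a unit vector $v$, and let $D_\pm$ be the horoballs of the rays $t\mapsto \exp_{\analyticcenter}(\pm(t-r)v)$. In the Poincar\'e disk centered at $\analyticcenter$ these are Euclidean disks tangent to the unit circle at $\pm v$ and passing through $\mp\tanh(r/2)\,v$. Their intersection $D$ satisfies $B(\analyticcenter,r)\subseteq D\subseteq B(\analyticcenter,R)$ with $\tanh(R/2)=\sqrt{\tanh(r/2)}$, that is, $R=\arcosh(e^r)\leq r+1$.

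By \ref{P4}, $D$ has a $2\theta$-barrier. Summing its images under the two reflections preserving $D$ gives an $8\theta$-barrier whose analytic center is $\analyticcenter$. \cref{thm:ball_like} then yields $8\theta\geq 2^{-6}\sqrt{r}\,\frac{r}{r+1}-\frac12$ for every $r>0$, which is impossible for finite $\theta$. This is exactly the paper's argument, and the symmetrization step is essential there because \cref{thm:ball_like} requires the analytic center at $\analyticcenter$.
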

\noindent
In Euclidean space, horoballs are simply half-spaces.
In general, horoballs are defined as the sublevel sets of Busemann functions.
The latter also arise as certain special Kempf--Ness functions, % (associated with a highest weight vector of an irreducible representation),
see~\citep[Lem.~3.12]{hirai1} and \citep{franks2020minimal}.
Notably, Kempf--Ness functions are the objectives to be minimized in scaling problems \citep{burgissernoncommutativeoptimization}.
\Cref{cor:horoball} is a special instance of the following more general result.

\begin{corollary} \label{cor:horoconvexset}
    Let $D \subsetneq \HH^2$ be an open horospherically convex\footnote{For instance, $D$ may be an intersection of a possibly infinite number of open horoballs and balls (of potentially differing radii).  See \cref{sec:preliminariesSec2} for a general definition.} set in the hyperbolic plane.
    Assume $D$ contains a ball of radius $r$.
    If $F$ is a $\theta$-barrier for $D$, then $\theta \geq \frac{1}{2^9} \sqrt{r} - \frac{1}{8}$.
\end{corollary}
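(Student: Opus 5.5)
The plan is to reduce to \cref{thm:ball_like} by symmetrizing $F$ over a finite group of rotations. The target is a new barrier, with parameter at most $4\theta$, on a domain that is sandwiched between two balls of comparable radii about a common center, and whose analytic center is that center. The constants support this: $\frac{1}{2^9}\sqrt r - \frac18 = \frac14\bigl(\frac{1}{2^7}\sqrt r - \frac12\bigr)$. So it suffices to produce a $4\theta$-barrier for a domain with inner radius $r'\approx r$ and ratio $r'/R' \geq \frac12$, after adjusting $r$ by a constant if necessary. Along the way I am implicitly assuming $r$ is larger than some absolute constant; for small $r$ the claimed bound is at most $0$, hence trivial, because a $\theta$-barrier has $\theta \geq 1$.

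\textbf{Setup.} Let $B(q,r)\subseteq D$. Since $D\neq \HH^2$ is open, there is a boundary point $x\in\partial D$ nearest to $q$, with $d(q,x)=\rho\geq r$. By horospherical convexity, some open horoball $H\supseteq D$ has $x$ on its boundary horocycle. I would first argue that $H$ can be taken tangent to the circle $\partial B(q,\rho)$ at $x$. Indeed, $B(q,\rho)\subseteq \overline{D}\subseteq \overline H$, and a horocycle through $x$ that does not cut into the ball must be tangent there. To simplify the geometry I would then shrink to the ball of radius $\rho$, so that the horoball touches it exactly.

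\textbf{Symmetrization.} Let $g$ be the rotation by $\pi/2$ about $q$, and set $D'=\bigcap_{i=0}^{3} g^i D$ and $F'=\sum_{i=0}^3 F\circ g^{-i}$. Each summand $F\circ g^{-i}$ is a $\theta$-barrier for $g^iD$, since isometries preserve self-concordance and the barrier parameter. Summing barriers of domains with nonempty intersection gives a $4\theta$-barrier for the intersection; I take the Euclidean fact that self-concordance and the parameter add to carry over verbatim, because everything is pointwise in the tangent space. The function $F'$ is invariant under $g$. Its analytic center exists once $D'$ is bounded, and it is unique by strict convexity of a nondegenerate self-concordant barrier. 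It is therefore fixed by $g$, so it equals $q$. Moreover $B(q,\rho)\subseteq D'$, and
\[
D'\subseteq \bigcap_{i=0}^3 g^iH,
\]
which is the intersection of four horoballs tangent to $\partial B(q,\rho)$ at four points spaced by angle $\pi/2$.

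\textbf{Main obstacle: the outer radius.} The key geometric step, which I expect to be the hardest, is to show that this four-horoball intersection is contained in $B(q,\rho+c)$ for an absolute constant $c$. I would compute in the upper half-plane or the Poincar\'e disk. The horoball tangent at distance $\rho$ in direction $\phi$ is given by the Busemann function $b_{\xi_\phi}(p)<\rho$ (up to sign convention), normalized so that $b_{\xi_\phi}(q)=0$. For a point $p$ at distance $s$ from $q$ in direction $\psi$, one has
\[
b_{\xi_\phi}(p)=\log\bigl(\cosh s-\sinh s\cos(\psi-\phi)\bigr).
\]
Some $\phi$ among the four satisfies $|\psi-\phi|\geq \pi/4$ up to the appropriate sign choice, and then
\[
b_{\xi_\phi}(p)\geq \log\bigl(e^s(1-\cos(\pi/4))/2\bigr)=s-c_0 .
\]
Hence $s<\rho+c_0$ with $c_0=\log\frac{2}{1-\cos(\pi/4)}$, an explicit constant. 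Applying \cref{thm:ball_like} to $D'$ with $r'=\rho$ and $R'=\rho+c_0$ then gives $4\theta\geq \frac{1}{2^6}\sqrt\rho\,\frac{\rho}{\rho+c_0}-\frac12$. For $\rho\geq c_0$ this is at least $\frac{1}{2^7}\sqrt r-\frac12$, and dividing by $4$ yields the claimed bound. Finally, \cref{cor:horoball} follows, since a horoball contains balls of every radius.
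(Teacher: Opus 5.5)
Your proposal is correct and follows essentially the same route as the paper: take the largest ball about the center, a supporting horoball tangent at the touching point, symmetrize over a group of four isometries whose only fixed point is the center, bound the outer radius by the inner radius plus an absolute constant, and apply \cref{thm:ball_like} with ratio at least $\frac12$. The differences are cosmetic: you use the $\pi/2$-rotations where the paper uses reflections across two orthogonal geodesics, and you bound the outer radius by a direct Busemann estimate ($R'\le\rho+\log\frac{2}{1-\cos(\pi/4)}$) where the paper reuses the Poincar\'e-disk computation $R=\arcosh(e^{\tilde r})\le\tilde r+1$ for two opposite horoballs.
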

\noindent It is an open question whether~\cref{cor:horoconvexset} can be extended to geodesically convex sets.

We emphasize that all our results hold even if we only insist that $F$ is a self-concordant barrier \emph{along geodesics}, a weaker condition than full self-concordance (see~\citep[Sec.~3.2]{HNW}).
This is evident from the proofs, which we present after some brief preliminaries.

%=============================================================================
\section{Preliminaries and notation}\label{sec:preliminaries}
%=============================================================================
We briefly review the required background on Hadamard manifolds, geodesic and horospheric convexity, and self-concordant barriers, and introduce our notation.

%-----------------------------------------------------------------------------
\subsection{Hadamard manifolds and the hyperbolic plane}
%-----------------------------------------------------------------------------
Let $M$ be a Hadamard manifold---a complete simply connected smooth Riemannian manifold with nonpositive curvature, see~\citep{bridsonmetric} or~\citep[Ch.~12]{lee2018riemannian}.
The manifold~$M$ has tangent bundle~$T M$ and tangent spaces~$T_p M$.
The Riemannian metric on $M$ is denoted $\langle {\cdot}, {\cdot} \rangle$.
The Riemannian metric gives $M$ a notion of distance $d(\cdot, \cdot)$ as well as its geodesics (which generalize the straight lines in Euclidean space).
For $p, q \in M$, $\tau_{p \rightarrow q} \colon T_p M \rightarrow T_q M$ denotes parallel transport along the unique geodesic segment connecting $p$ and $q$.
The boundary of a set~$D \subseteq M$ is denoted $\partial D$.
The exponential map of~$M$ at~$p \in M$ is denoted by $\exp_p \colon T_p M \to M$, and its inverse map by $\log_p \colon M \to T_p M$.
As $M$ is Hadamard, the exponential map and its inverse are global diffeomorphisms.
An open \emph{half-space} is a set of the form
\begin{align}\label{eq:halfspace}
\{p \in M : \langle \log_q(p), v \rangle < 0\} = \{ \exp_q(w) : w \in T_qM, \langle w, v \rangle < 0 \}
\end{align}
for some fixed $q \in M$ and $v \in T_qM$.
The open \emph{ball} of radius $r$ and center $c$ in $M$ is denoted
\[ B(c, r) = \{p \in M : d(c, p) < r\}. \]
We denote the closed ball by $\bar{B}(r,c)$.
To every geodesic ray $\gamma \colon [0, \infty) \to M$, we can associate an open \emph{horoball}, which is a certain open set whose boundary contains $\gamma(0)$.
Formally, given a unit-speed geodesic ray $\gamma$, we define the Busemann function $b_\gamma(p) = \lim_{t\to\infty} d(\gamma(t), p) - t$.
Then the associated horoball is the sublevel set~$b_\gamma^{-1}((-\infty, 0))$~\citep[Sec.~II.8]{bridsonmetric}.
In Euclidean space, Busemann functions are affine functions and horoballs are half-spaces.

The hyperbolic plane $\HH^2$ is the two-dimensional Hadamard manifold with constant negative (sectional) curvature $-1$, see~\citep{ratcliffeFoundationsHyperbolicManifolds2019}.
Across every geodesic line $\gamma$ in $\HH^2$ there is a reflection isometry which fixes $\gamma$, see~\citep[Ch 5]{ratcliffeFoundationsHyperbolicManifolds2019} or~\citep[Sec.~I.6]{bridsonmetric} for the isometries of hyperbolic space.
In the Poincar\'e disk model for~$\HH^2$, balls are simply Euclidean balls in the interior of the unit disk, while horoballs are Euclidean balls tangent to its boundary, that is, to the unit circle (see \cref{fig:test2}).
We will need the hyperbolic law of cosines: in a hyperbolic triangle with sides of length~$a, b$ and angle~$\theta$ between them, the length~$c$ of the third side is given by
\begin{align}\label{eq:hyperpythag}
\cosh(c) = \cosh(a) \cosh(b) - \sinh(a) \sinh(b) \cos(\theta).
\end{align}
Any symmetric Hadamard space that is not flat, that is, not a Euclidean space, contains the hyperbolic plane as a totally geodesic submanifold (up to an overall rescaling of the metric).

%-----------------------------------------------------------------------------
\subsection{Geodesic convexity}\label{sec:preliminariesSec2}
%-----------------------------------------------------------------------------
A subset $D \subseteq M$ is called \emph{geodesically convex (g-convex)} if for all $p, q \in M$, the geodesic segment connecting $p$ and $q$ lies in $D$.
We call a closed subset $D$ \emph{horospherically convex} if it equals a (possibly uncountable) intersection of closed horoballs.
An open horospherically convex set is the interior of a closed horospherically convex set.
All horospherically convex sets are g-convex, but the converse is not true.
Balls and horoballs are horospherically convex in any Hadamard manifold.
Half-spaces~\eqref{eq:halfspace} in hyperbolic space are g-convex, but not horospherically convex.
However, in general Hadamard manifolds, half-spaces are not even g-convex.
For references on horospherical convexity, see~\citep{goodwin2024subgradient,criscitiello2025horosphericallyconvexoptimizationhadamard}.

For $F \colon M \to \mathbb{R}$, we denote its Riemannian gradient and Hessian at $p$ by $\nabla F(p) \in T_p M$ and $(\nabla^2 F)_p \colon T_p M \times T_p M \to \mathbb{R}$, respectively (see \citep{AMS08,boumal2020intromanifolds} for definitions).
The Riemannian Hessian evaluated along a direction $v \in T_p M$ is denoted $(\nabla^2 F)_p(v,v)$.
Similarly, we denote the Riemannian third derivative at $p$ by $(\nabla^3 F)_p \colon T_p M \times T_p M \times T_p M \to \mathbb{R}$.
Unlike the Euclidean third derivative, $(\nabla^3 F)_p(\cdot, \cdot, \cdot)$ is not symmetric in its inputs, see~\citep[Sec 3.1]{HNW} or~\citep[Rem 3.2]{criscitiello2020accelerated}.

A function $F \colon D \to \mathbb{R}$ is \emph{g-convex} if for every unit-speed geodesic segment~$\gamma$ contained in $D$, the composition $t \mapsto f(\gamma(t))$ is convex.
If $F$ is differentiable, this is equivalent to $F(q) \geq F(p) + \langle {\nabla F(p)}, {\log_p(q)}\rangle$ for all $p, q \in M$, and if $F$ is twice differentiable, to $(\nabla^2 F)_p \succeq 0$ for all $p \in M$.
Similary, $F$ is \emph{$\mu$-strongly convex} ($\mu>0$) if $t \mapsto f(\gamma(t))$ is $\mu$-strongly convex for every~$\gamma$ as above.
If $F$ is differentiable, $F$ is $\mu$-strongly g-convex in $M$ if and only if
\begin{align}\label{eq:strongconvexity}
  F(q) \geq F(p) + \langle {\nabla F(p)}, {\log_p(q)}\rangle + \frac{\mu}{2} d(p, q)^2 \quad \quad \forall p, q \in M.
\end{align}
If $F$ is twice differentiable, $F$ is $\mu$-strongly g-convex in $M$ if and only if $(\nabla^2 F)_p \succeq \mu I$ for all~$p \in M$.
Distance functions $x \mapsto d(x,p)$ are g-convex, and squared distances $x \mapsto d(x,p)^2$ are $2$-strongly g-convex.
For references on g-convex optimization see \citep{udriste1994convex},~\citep{bacak2014hadamard} or~\citep[Ch.~11]{boumal2020intromanifolds}.

%-----------------------------------------------------------------------------
\subsection{Self-concordant barriers}
%-----------------------------------------------------------------------------
Let us define self-concordant barriers on Hadamard manifolds $M$, as introduced by~\citep[Secs.~3 \& 4]{HNW}.
Let $D$ be an open g-convex subset of $M$.
A function $F \colon D \to \mathbb{R}$ is \emph{strongly $1$-self-concordant} if for all $p \in D$ and $u,v,w \in T_p M$
\begin{align}\label{eq:selfconcordance}
|(\nabla^3 F)_p(u, v, w)| \leq 2 \sqrt{(\nabla^2 F)_p(u,u)}\sqrt{(\nabla^2 F)_p(v,v)}\sqrt{(\nabla^2 F)_p(w,w)},
\end{align}
and its epigraph $\{(p, t) : p \in D, t\geq F(p) \}$ is a closed subset of $M \times \mathbb{R}$.
We say $F$ is strongly $1$-self-condordant \emph{along geodesics} if~\eqref{eq:selfconcordance} holds when $u=v=w$ (this is a weaker condition).

A function $F \colon D \to \mathbb{R}$ is a \emph{$\theta$-barrier (along geodesics)}, $\theta \geq 0$, if $F$ is strongly $1$-self-concordant (along geodesics), has $(\nabla^2 F)_p \succ 0$ for all $p \in D$,%
\footnote{The assumption $(\nabla^2 F)_p \succ 0$ is not strictly necessary, due to~\cite[Cor 3.9]{HNW}.} and
\[ \max_{0 \neq v \in T_p M} \frac{\langle \nabla F(p), v\rangle}{\sqrt{(\nabla^2 F)_p(v,v)}} \leq \sqrt{\theta} \quad \quad \forall p \in D. \]
The quantity~$\theta$ is called the \emph{barrier parameter} of $F$, and plays a crucial role in the complexity of \emph{barrier-generated path-following methods}.
If a $\theta$-barrier $F \colon D \to \mathbb{R}$ attains its minimum on $D$ (this is the case, e.g., if $D$ is bounded~\citep[Cor.~3.19]{HNW}), then the minimizer is unique and is called the \emph{analytic center}.

We list known properties of barriers which we use in our proofs.
Let $F$ be a $\theta$-barrier along geodesics for $D \subseteq M$.
\begin{enumerate}[label=(\textbf{P\arabic*})]
\item \label{P1} \cite[Thm.~3.4]{HNW}
If $p \in D$ and $u \in T_p M$ with $(\nabla^2 F)_p(u,u) < 1$, then~$q = \exp_p(u) \in D$, and
\[ (\nabla^2 F)_p(u,u) \geq \Big(1-\sqrt{(\nabla^2 F)_p(u, u)}\Big)^2 (\nabla^2 F)_q(\tau_{p \to q} u, \tau_{p \to q} u). \]

\item \label{P2} \citep[Cor.~3.8]{HNW}
If $p \in D$, $u \in T_p M$ with $\exp_x(u) \not \in D$, then $(\nabla^2 F)_p(u,u) \geq 1$.

\item \label{P3} \citep[Prop.~4.7]{HNW} If $F$ is bounded from below, with analytic center $\analyticcenter \in D$, then
\[ (\nabla^2 F)_{\analyticcenter}(\log_{\analyticcenter}(p),\log_{\analyticcenter}(p)) \leq (2\theta+1)^2 \quad \quad \forall p \in D. \]

\item \label{P4} \citep[Lem.~4.2]{HNW} If $F_i \colon D_i \to \mathbb{R}$ is a $\theta_i$-barrier (along geodesics), for $i=1,2$, then $F_1+F_2$ is a $(\theta_1 + \theta_2)$-barrier (along geodesics) for $D_1\cap D_2$.
\end{enumerate}
\ref{P1} and~\ref{P2} follow from $1$-self-concordance along geodesics.  \ref{P3} follows from the $\theta$-barrier property along geodesics. \ref{P4} follows from the sum of $1$-self-concordant (along geodesics) functions being $1$-self-concordant (along geodesics), as well as a simple estimate on the barrier parameter of a sum of barriers.

%=============================================================================
\section{Triangles and half-spaces}\label{sec:triangles}
%=============================================================================
In this section we provide a lower bound on the parameter of a self-concordant barrier for an equilateral triangle in the hyperbolic plane.

\begin{proof}[Proof of~\cref{thm:triangle} (Equilateral triangles)]
    Let $D$ be the interior of an equilateral triangle with side lengths $L$, and let $F \colon D \to \mathbb{R}$ be a $\theta$-barrier.
    Because $D$ is bounded, we know that $F$ has an analytic center.

    First we reduce to the situation that the analytic center coincides with the centroid of the triangle.
    To this end, let $G$ be the subgroup of the symmetry group of~$D$ which corresponds to rotations by $2\pi/3$ about the centroid (it is isomorphic to~$\ZZ/3\ZZ$).
    Then $\tilde F(p) = \sum_{g \in G} F(g \cdot p)$ is a $\tilde \theta$-barrier
    for $\tilde\theta = 3 \theta$, by property~\ref{P4}.
    Moreover, the gradient of $\tilde F$ at the centroid equals the sum of three rotated copies of the gradient of $F$ at the centroid, and is therefore equal to zero.
    Therefore, by g-convexity of $\tilde F$, the centroid must be the analytic center~$\analyticcenter \in D$ of~$\tilde F$.

    Let the vertices of the triangle be $a,b,c$.
    Consider the geodesic segment which passes through~$\analyticcenter$ and~$a$, and intersects the side opposite to~$a$ at the point~$q$, see \cref{fig:test1}.
    Using the hyperbolic law of cosines~\eqref{eq:hyperpythag} twice,%
\footnote{First on the isosceles triangle $b a \analyticcenter$ to determine $d(\analyticcenter, a) = d(\analyticcenter, b)$, using that the angle at~$\analyticcenter$ equals~$2\pi/3$, and then on the right triangle $b q \analyticcenter$ to determine $d(\analyticcenter, q)$.} we find
    \[ \cosh(d(\analyticcenter, q))^2 = \frac{\cosh(L) - \cos(\frac{2\pi}{3})}{(1 - \cos(\frac{2\pi}{3})) \cosh(L/2)^2} = \frac{4 \cosh (L)+2}{3 \cosh (L)+3} < \frac{4}{3}. \]
    Therefore, $d(\analyticcenter, q) < \arcosh(2/\sqrt{3}) < 1$.

    As $bq\analyticcenter$ is a right triangle, we also have
    \[ \cosh(d(\analyticcenter, b)) = \cosh(d(b,q)) \cosh(d(\analyticcenter,q)) = \cosh(L/2) \cosh(d(\analyticcenter,q)) > \cosh(L/2). \]
    Since~$d(\analyticcenter,a) = d(\analyticcenter, b)$, we conclude that~$d(\analyticcenter,a) > \frac{L}{2}$.

    Let $u = \log_{\analyticcenter}(q)$.  Since $q = \exp_{\analyticcenter}(u) \not \in D$, property~\ref{P2} implies $(\nabla^2 \tilde F)_{\analyticcenter}(u, u) \geq 1$.
    On the other hand, $\exp_{\analyticcenter}(- \frac{L}{2} \cdot \frac{u}{\|u\|}) \in D$ because $d(\analyticcenter, a) > \frac{L}{2}$.
    Therefore property~\ref{P3} implies
    \[ \frac{L^2}{4 \|u\|^2}(\nabla^2 \tilde F)_{\analyticcenter}(u, u) = (\nabla^2 \tilde F)_{\analyticcenter}\Big(\frac{L}{2} \cdot \frac{u}{\|u\|}, \frac{L}{2} \cdot \frac{u}{\|u\|}\Big) \leq (2 \tilde \theta + 1)^2. \]
    Using $\|u\| = d(\analyticcenter, q) \leq 1$ and $(\nabla^2 \tilde F)_{\analyticcenter}(u, u) \geq 1$, we conclude $6 \theta + 1 = 2 \tilde \theta + 1 \geq L/2$.
\end{proof}

The proof of~\cref{thm:triangle} is quite general.
It applies to any domain $D$ with analytic center~$\analyticcenter$ for which there is a geodesic segment~$\gamma$ through~$\analyticcenter$, satisfying:
(1)~in one direction along~$\gamma$ all points in~$D$ are~$O(1)$-close to~$\analyticcenter$, and~(2) in the other direction there are points in~$D$ which are~$\Omega(L)$-far away from~$\analyticcenter$.
Now we turn our attention to hyperbolic half-spaces.

\begin{proof}[Proof of \cref{cor:halfspace} (Half-spaces)]
For contradiction, assume there is a $\theta$-barrier for some hyperbolic half-space~$D$.
Then, by symmetry, every half-space in $\HH^2$ has a $\theta$-barrier.
Take three half-spaces, with corresponding $\theta$-barriers $F_1, F_2, F_3$, and arrange them such that their intersection forms the interior of an equiliteral triangle with side-lengths~$L$ satisfying~$\frac{L}{12} - \frac{1}{6} > 3 \theta$.
Then $F_1 + F_2 + F_3$ is a $3\theta$-barrier for the triangle, by property~\ref{P4}.
By \cref{thm:triangle}, we must have~$\frac L {12} - \frac 1 6 < 3 \theta$.
This is the desired contradiction.
\end{proof}

\begin{remark}
Let $D \subseteq \HH^2$ be an open half-space.
One might expect that $F \colon p \mapsto -\log(d(p, \partial D))$ is a barrier for $D$.
However, $F$ is not even g-convex!
In fact: the following proposition shows that any g-convex function on~$D$ that only depends on the distance~$d(p, \partial D)$ to the boundary will never increase towards the boundary!
That is, even the most basic requirements of self-concordant barriers, namely convexity and blowing up near the boundary, are incompatible.
\end{remark}

\begin{proposition}
    \label{thm:halfspace}
    Consider an open half-space~$D$ in the hyperbolic plane.
    Let~$f\colon \RR_{>0} \to \RR$ be any smooth function.
    If~$F(x) = f(d(x, \partial D))$ is convex, then~$f$ is non-decreasing: if~$0 < s \leq r$, then~$f(s) \leq f(r)$.
    % In other words, $F$ ``decreases'' towards the boundary!
\end{proposition}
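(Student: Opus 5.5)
The plan is to exploit the fact that in $\HH^2$ the equidistant curves (hypercycles) to the boundary geodesic bend away from it. Let $\ell = \partial D$, which is a complete geodesic line. I will work in Fermi coordinates $(t,s)$ adapted to $\ell$: the point $x(t,s)$ is obtained by moving a distance $t$ along $\ell$ from a fixed base point and then a distance $s>0$ along the perpendicular into $D$. In these coordinates $d(x(t,s),\partial D) = s$. So $F(x(t,s)) = f(s)$, and $F$ is constant along each hypercycle $\{s = \text{const}\}$.

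Fix $r>0$ and $T>0$, and consider the two points $x_\pm = x(\pm T, r)$. The reflection across the perpendicular to $\ell$ through $x(0,0)$ swaps $x_+$ and $x_-$. Hence the geodesic segment joining them is symmetric under this reflection, and its midpoint $m_T$ lies on that perpendicular, at some distance $s(T) = d(m_T,\partial D)$.

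The key geometric step is to compute $s(T)$. The quadrilateral with vertices $x(0,0)$, $x(T,0)$, $x_+$, $m_T$ has right angles at $x(0,0)$, at $x(T,0)$, and at $m_T$, the last one by the symmetry of the segment. So it is a Lambert quadrilateral. The standard relation for such a quadrilateral, which I would verify directly, e.g.\ via the hyperboloid model, gives
\[ \tanh s(T) = \frac{\tanh r}{\cosh T}. \]
I expect this formula to be the only real computational obstacle. If the exact identity proves awkward, it suffices to establish three weaker facts: $s(0)=r$, $s(T)\to 0$ as $T\to\infty$, and $s$ is continuous in $T$. Continuity is clear, and the limit follows because $x_\pm$ converge to the two endpoints of $\ell$ at infinity, so the segment between them converges to $\ell$ itself.

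With this in hand, convexity of $F$ along the segment $[x_-,x_+]$ gives
\[ f(s(T)) = F(m_T) \leq \tfrac12\bigl(F(x_-)+F(x_+)\bigr) = f(r). \]
By the intermediate value theorem, $s(T)$ takes every value in $(0,r]$ as $T$ ranges over $[0,\infty)$. Therefore $f(s)\leq f(r)$ for all $0<s\leq r$, which is the claim.
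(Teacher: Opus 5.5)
Your proof is correct and is essentially the paper's argument. The chord $[x_-,x_+]$ is exactly the geodesic through $m_T$ orthogonal to the perpendicular from $m_T$ to $\partial D$, and you use the same reflection symmetry and convexity along that geodesic. You differ only in which quantity you fix: you fix the outer distance $r$ and sweep the chord via $\tanh s(T) = \tanh r/\cosh T$ (this identity checks out in the hyperboloid model), whereas the paper fixes the inner point $p$ and moves along $\gamma$, using that an even convex function is non-decreasing on $[0,\infty)$. Your explicit formula also supplies the ``hyperbolic trigonometry'' step that the paper leaves unproved.
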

\begin{proof}
Fix a point $p \in D$.
Let $q$ be the metric projection of $p$ onto $\partial D$, i.e., $q = \argmin_{q' \in \partial D} d(p,q')$.
Let $c \colon [0,1] \to \HH^2$ be the geodesic segment from $p$ to $q$ (so $c$ is orthogonal to $\partial D$).
Consider the geodesic $\gamma$ with $\gamma(0) = p$ and orthogonal to $c$.
By reflection symmetry of $\HH^2$ across $c$, $t \mapsto F(\gamma(t)) = f(d(\gamma(t), \partial D))$ is an even convex function.
In particular, it attains its minimum at~$t=0$ and is non-decreasing on $[0, \infty)$.
On the other hand, hyperbolic trigonometry shows that $d(\gamma(t), \partial D)$ is strictly increasing for $t \geq 0$, with image~$[d(p, \partial D), \infty)$.
We conclude $f$ is non-decreasing on this interval, hence on all of~$\RR_{>0}$ (since $p$ was arbitrary).
\end{proof}

\begin{figure}
\centering
\begin{minipage}{.47\textwidth}
  \centering
  \includegraphics[
    width=\linewidth,
    trim=9cm 3cm 9cm 2.5cm,
    clip
  ]{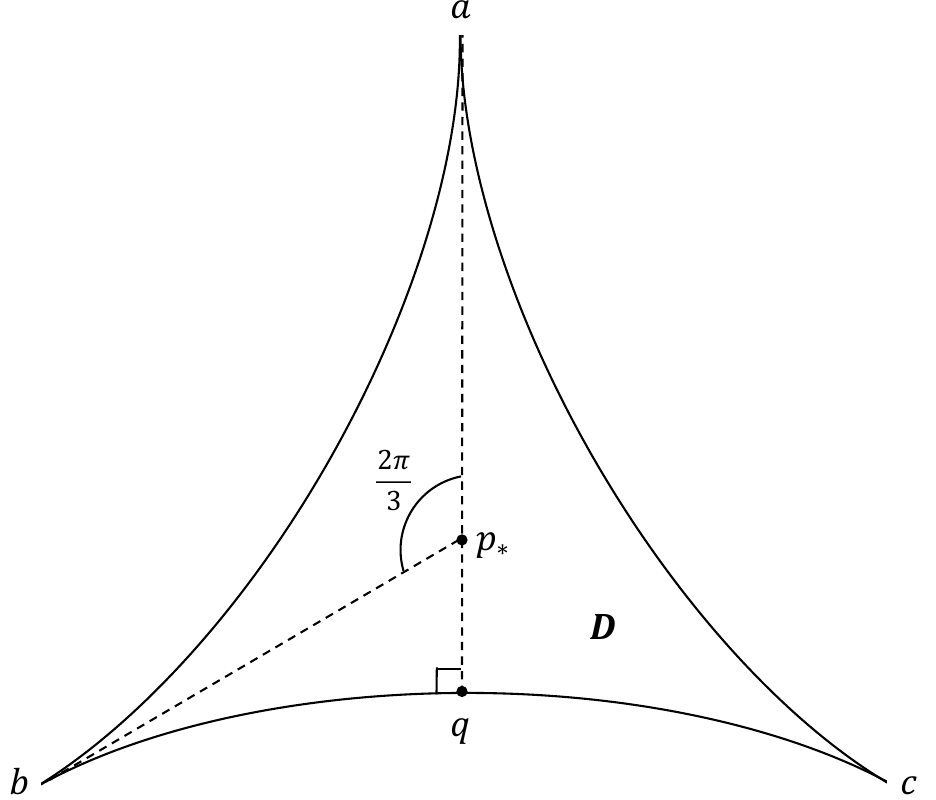}
  \captionof{figure}{A hyperbolic equilateral triangle, see \cref{sec:triangles}.}
  \label{fig:test1}
\end{minipage}%
\hfill
\begin{minipage}{.47\textwidth}
  \centering
  \includegraphics[
    width=0.9\linewidth,
    trim=9cm 2cm 9cm 2cm,
    clip
  ]{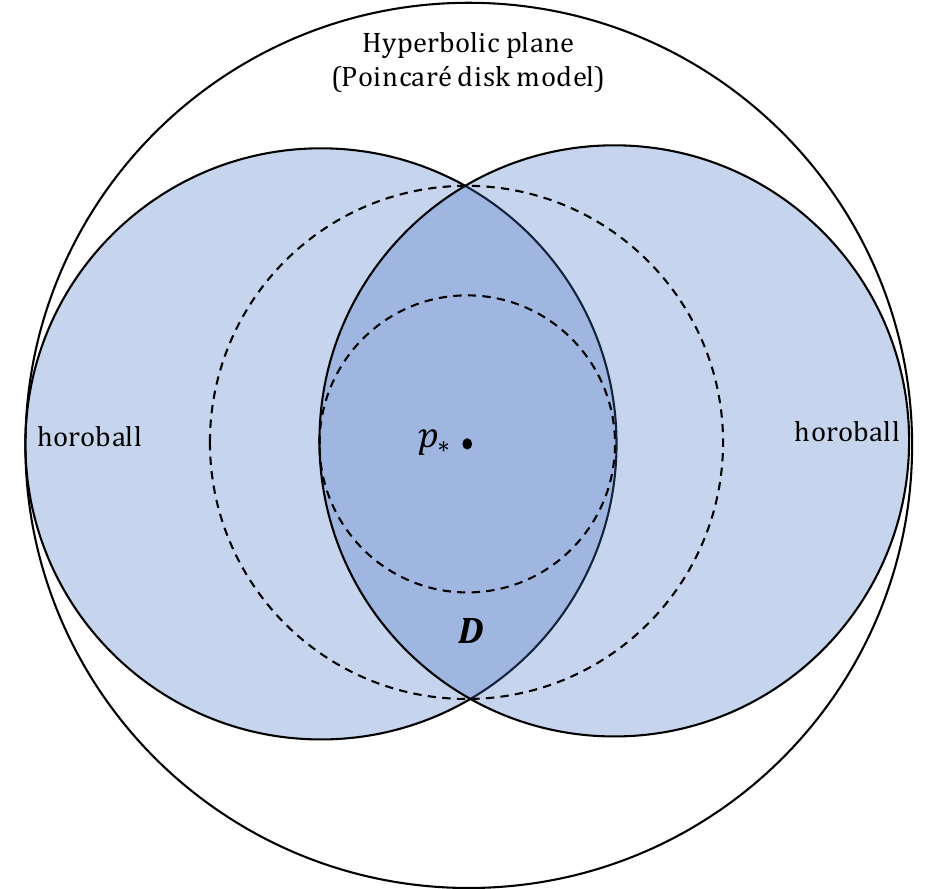}
  \captionof{figure}{Intersection of two horoballs in the hyperbolic plane, see \cref{sec:horoball}.}
  \label{fig:test2}
\end{minipage}
\end{figure}

%=============================================================================
\section{Balls and horoballs}\label{sec:balls}
%=============================================================================
In this section we provide lower bounds on the parameter of self-concordant barriers for ball-like domains~$D \subseteq \HH^2$ in the hyperbolic plane.
The strategy is conceptually similar to the argument for the triangle: using property~\ref{P2}, we first show that there exists a point $p \in D$ and unit direction $v \in T_p \HH^2$ where the Hessian $(\nabla^2 F)_p(v,v)$ is sufficiently large.
Moreover, $p$ is chosen such that it is the minimizer of~$F$ restricted to the geodesic in the direction of~$v$, i.e., the analytic center.
We then appeal to property~\ref{P3} to lower bound the parameter $\theta$.
Part of the proof is inspired by Theorem~6 in~\citep{hamilton2021nogo}, which gives a lower bound on the condition number of a g-convex function.

\begin{proof}[Proof of \cref{thm:ball_like}]
If $r < 2$, then the bound $\theta \geq \frac{1}{2^6} \cdot \sqrt r \cdot \frac{r}{R} - \frac{1}{2}$ holds trivially, as the right-hand side is negative.
Therefore, we assume $r \geq 2$ in the remainder of the proof.
% $F$ is a $\theta$-barrier for $D \subseteq \HH^2$ with analytic center $\analyticcenter$ (which exists since $D$ is bounded).
Without loss of generality, we also assume $F(\analyticcenter) = 0$.

We first establish that~$(\nabla^2 F)_p \succeq \frac{1}{R^2} I$ for all~$p \in D$.\footnote{From the triangle inequality, one can deduce~$D \subseteq B(p,2R)$, which combined with property~\ref{P2} leads to the weaker bound~$(\nabla^2 F)_p \succeq \frac{1}{(2R)^2} I$.}
Let~$p \in D$ and~$u \in T_p \HH^2$ such that~$(\nabla^2 F)_p(u,u) = 1$.
Let~$v = \log_p(\analyticcenter)$. Then either~$u$ or $-u$ has non-positive inner product; without loss of generality, assume that it is $u$.
Property~\ref{P2} implies that $q = \exp_p(u) \in \overline{D}$.
Therefore~$d(\analyticcenter,q) \leq R$, and
\begin{align*}
  \cosh(R) \geq \cosh(d(\analyticcenter, q)) & = \cosh(d(p,q)) \cosh(d(p, \analyticcenter)) - \sinh(d(p,q)) \sinh(d(p,\analyticcenter)) \frac{\langle u, v \rangle}{\norm{u} \norm{v}} \\
                                             & \geq \cosh(d(p,q)) \cosh(d(p, \analyticcenter))
\end{align*}
by \eqref{eq:hyperpythag} and~$\langle u,v \rangle \leq 0$.
As~$\cosh(d(p,\analyticcenter)) \geq 1$, we conclude that~$d(p,q) \leq R$, hence~$\norm{u} \leq R$. Thus we have shown that $(\nabla^2 F)_{p}(u,u) = 1$ implies~$\norm{u}^2 \leq R^2$, which is equivalent to the bound~$(\nabla^2 F)_p \succeq \frac{1}{R^2} I$.

This establishes that $F$ is $\frac1{R^2}$-strongly g-convex.
Therefore, equation~\eqref{eq:strongconvexity} gives
\begin{equation}\label{eq_quadratic_growth}
F(p) \geq \frac{1}{2 R^2} d(p, \analyticcenter)^2 \quad \quad \forall p \in D.
\end{equation}
Let $p$ be a minimizer of $F$ over the hyperbolic circle $\partial B(\analyticcenter, \frac{r-1}{2})$.
Then for any $q \in \partial B(\analyticcenter, \frac{r}{2})$,
\begin{equation} \label{eq_lower_bound_q}
\frac{r-1}{r}F(q) = \frac{1}{r}F(\analyticcenter) + \frac{r-1}{r}F(q) \geq F(p)
\end{equation}
by convexity of $F$ (along the geodesic segment from~$\analyticcenter$ to~$q$) and minimality of $p$.

Let $\gamma$ be the unit-speed geodesic passing through $p$, $\gamma(0) = p$, and tangent to $\partial B(\analyticcenter, \frac{r-1}{2})$.
Let $q_{\pm}$ be the two intersection points of $\gamma$ with the circle $\partial B(\analyticcenter, \frac{r}{2})$: see~\cref{figure_ball}.
Defining
\[ \beta = \max_{|s| \leq d(p, q_+)} (\nabla^2 F)_{\gamma(s)}(\gamma'(s), \gamma'(s)), \]
we then have $F(q_\pm) \leq F(p) + \langle \nabla F(p), \log_p(q_\pm)\rangle + \frac{\beta}{2} d(p, q_\pm)^2$ by the Taylor expansion.
Summing these two inequalities and using~$\log_p(q_-) = -\log_p(q_+)$ (or $\langle \nabla F(p), \log_p(q_\pm) \rangle = 0$ by minimality) and $d(p,q_+) = d(p,q_-)$
yields
\begin{equation}\label{eq_pm}
\frac{F(q_+) + F(q_-)}{2} \leq F(p) + \frac{\beta}{2} d(p, q_+)^2.
\end{equation}
Combining this with inequality~\eqref{eq_lower_bound_q} and rearranging yields
\[ \frac{1}{r-1} F(p) \leq \frac{\beta}{2} d(p, q_+)^2. \]
Plugging in inequality~\eqref{eq_quadratic_growth} yields
\begin{equation}\label{eq_lower_bound_beta}
\frac{r-1}{8 R^2} = \frac{1}{r-1} \cdot \frac{1}{2 R^2} d(p, \analyticcenter)^2 \leq \frac{\beta}{2} d(p, q_+)^2 \leq 2 \beta.
\end{equation}
For the final inequality in~\eqref{eq_lower_bound_beta}, it is easy to show $d(p, q_+) \leq 2$.
Indeed, the hyperbolic law of cosines~\eqref{eq:hyperpythag} for the triangle~$\analyticcenter{}pq_+$ implies $d(p, q_+) = \arcosh(\frac{\cosh(r/2)}{\cosh((r-1)/2)}) \leq \arcosh(\sqrt{e}) \leq 2.$\footnote{Alternatively see~\citep[Sec 5]{hamilton2021nogo} or~\citep[App I]{bumpfctpaper}.}
Crucially, this bound on $d(p, q_+)$ is due to the negative curvature of the hyperbolic space~$\HH^2$; in Euclidean space~$\mathbb{R}^2$, we would instead have $d(p, q_+) \sim \sqrt{r}$.

\begin{figure}
  \centering
  \includegraphics[
    width=\linewidth,
    trim=1.5cm 1.5cm 3cm 1.5cm,
    clip
  ]{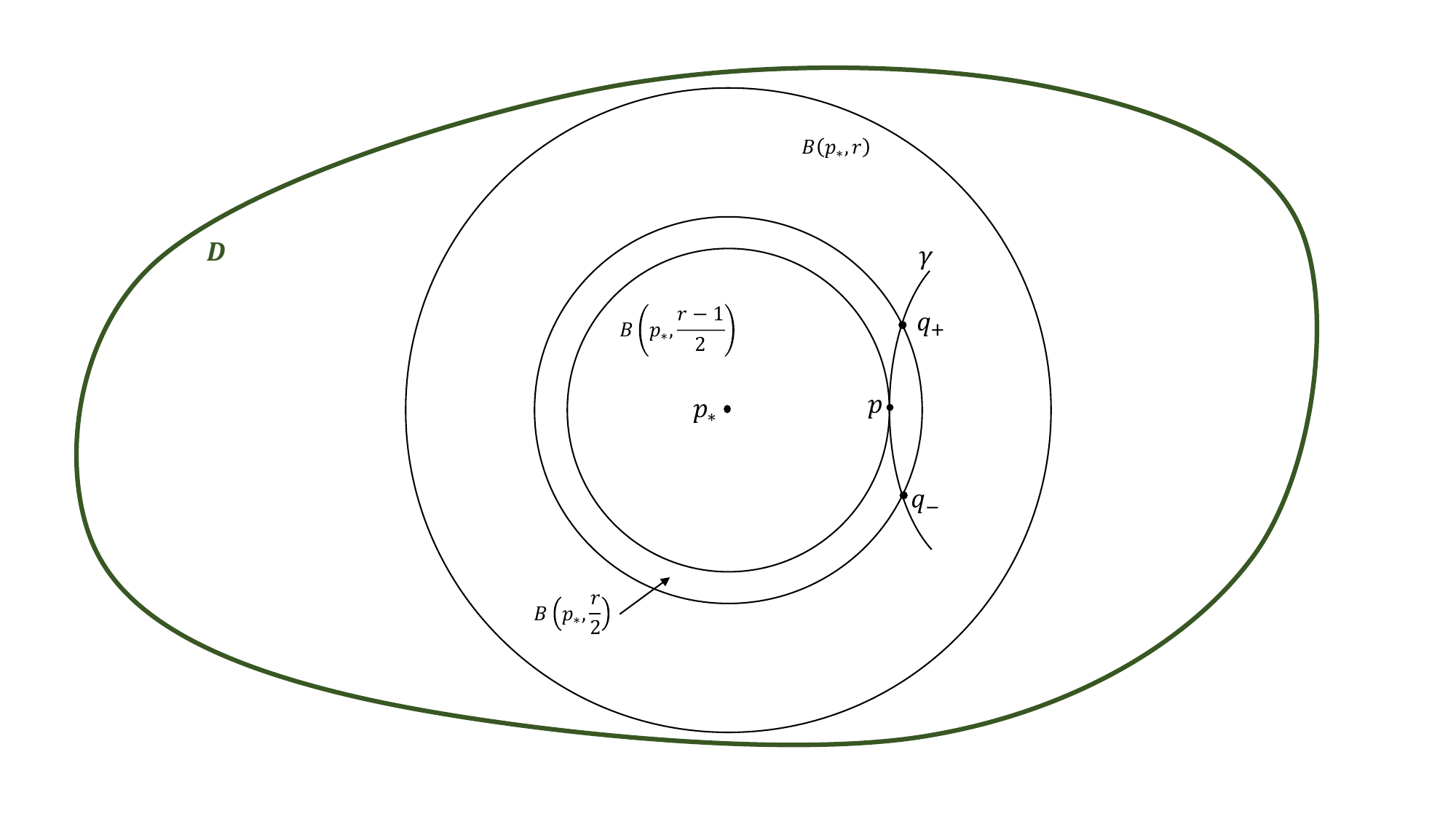}
  \caption{Geometric construction in $\HH^2$ for the proof of~\cref{thm:ball_like}.  See \cref{sec:balls}.}
  \label{figure_ball}
\end{figure}

By the definition of $\beta$, we conclude that there exists $s \in [-d(p, q_+), d(p, q_+)]$ with
\[ (\nabla^2 F)_{\gamma(s)}(\gamma'(s), \gamma'(s)) \geq \frac{r-1}{16 R^2}. \]
We can transfer this lower bound on $(\nabla^2 F)_{\gamma(s)}$ to a lower bound on $(\nabla^2 F)_{\gamma(0)}$.
We have two cases:
\begin{itemize}
\item If $(\nabla^2 F)_p(s \gamma'(0), s \gamma'(0)) < \frac{1}{4}$, then Property~\ref{P1} implies
\begin{equation}\label{eq_case2}
\begin{split}
  s^2 (\nabla^2 F)_p(\gamma'(0), \gamma'(0)) & = (\nabla^2 F)_p(s\gamma'(0), s\gamma'(0)) \\
&\geq \bigg(1-\sqrt{(\nabla^2 F)_p(s \gamma'(0), s \gamma'(0))}\bigg)^2 (\nabla^2 F)_{\gamma(s)}(s \gamma'(s), s \gamma'(s)) \\
&\geq \frac{1}{4} \cdot \frac{r-1}{16 R^2} \cdot s^2
\end{split}
\end{equation}
where we have used that $\exp_p(s \gamma'(0)) = \gamma(s)$ and $\tau \gamma'(0) = \gamma'(s)$ since $\gamma$ is a geodesic.
Therefore~$(\nabla^2 F)_p(\gamma'(0), \gamma'(0)) \geq (r-1) / (4 \cdot 16 R^2)$.

\item If $(\nabla^2 F)_p(s \gamma'(0), s \gamma'(0)) \geq \frac{1}{4}$, then
\begin{equation}\label{eq_case1}
(\nabla^2 F)_p(\gamma'(0), \gamma'(0)) \geq \frac{1}{4 s^2} \geq \frac{1}{16}
\end{equation}
since $|s|\leq d(p, q_+) \leq 2$.
\end{itemize}

Let $F|_\gamma$ denote the restriction of $F$ to the geodesic $\gamma$, which recall passes through~$p$ and~$q_\pm$.
Then $F|_\gamma$ is a $\theta$-barrier for the segment of $\gamma$ contained in $D$.
By minimality of $p$ on~$\partial B(\analyticcenter, \frac{r-1}{2})$, we know $\nabla (F|_\gamma)(p) = 0$ (i.e., $\nabla F (p)$ is orthogonal to $\gamma$), and so $p$ is the analytic center of $F|_\gamma$.
Therefore Property~\ref{P3} tells us that, for all $t \in \RR$ such that $\gamma(t) \in D$,
\[ (2 \theta + 1)^2
\geq (\nabla^2 (F|_\gamma))_p(t\gamma'(0), t\gamma'(0))
= (\nabla^2 F)_p(\gamma'(0), \gamma'(0)) d(p, \gamma(t))^2. \]
In particular, choosing $t$ so that $\gamma(t) \in \partial B(\analyticcenter, r)$, we must have $d(p, \gamma(t)) \geq \frac{r}{2}$, and so
\[ (2 \theta + 1)^2
\geq \min\bigg\{\frac{1}{16}, \frac{1}{4} \cdot \frac{r-1}{16 R^2}\bigg\} \cdot \Big(\frac{r}{2}\Big)^2 \geq \frac{1}{2^9} \frac{r^3}{R^2}. \]
where we have used inequalities~\eqref{eq_case1} and~\eqref{eq_case2}, and the assumption $r \geq 2$.
\end{proof}

\begin{remark}\label{rem:extensionballlike}
As mentioned in~\cref{sec:introduction}, \cref{thm:ball_like} extends to (a) hyperbolic space of any dimension and (b) positive definite matrices with the affine-invariant metric, as both contain the hyperbolic plane as totally geodesic submanifolds.

In fact, following along the lines of~\citep[App I]{bumpfctpaper}, it is easy to extend \cref{thm:ball_like} to any Hadamard manifold whose sectional curvatures are contained in the interval $[K_{\text{lo}}, K_{\text{hi}}]$ with $K_{\text{lo}} \leq K_{\text{hi}} < 0$.
\end{remark}

Appealing to symmetry, we easily get a lower bound for balls.

\begin{proof}[Proof of \cref{cor:ball} (Balls)]
    Let $\gamma$ be the geodesic passing through the ball's center $c$ which is orthogonal to $\nabla F(c)$ (if $\nabla F(c) = 0$, choose any geodesic).
    Let $\tilde F$ be the reflection of $F$ across~$\gamma$.
    Then $\tilde F$ is also a $\theta$-barrier for $D = B(c, R)$, and $F + \tilde F$ is a $2\theta$-barrier for $D$ by~\ref{P4}, with analytic center $c$ (because $(\nabla(F+\tilde F)(c) = 0$).
    Apply \cref{thm:ball_like} to $F + \tilde F$ with $r = R$.
\end{proof}

%-----------------------------------------------------------------------------
\subsection{Horoballs and horospherically convex sets}\label{sec:horoball}
%-----------------------------------------------------------------------------
Let us use \cref{thm:ball_like} to prove a lower bound on $\theta$ for horospherically convex sets on $\HH^2$.
We start with the proof of \cref{cor:horoball} (for horoballs) to illustrate the main ideas.

\begin{proof}[Proof of \cref{cor:horoball} (Horoballs)]
    For contradiction, suppose there is a $\theta$-barrier for some horoball in $\HH^2$.
    Then, by symmetry, every horoball has a $\theta$-barrier.
    Fix $\analyticcenter \in \HH^2$ and $v \in T_{\analyticcenter} \HH^2$ with $\norm{v} = 1$.
    Take $r > 0$ such that $8\theta < \frac{1}{2^6}\cdot \sqrt{r} \cdot \frac{r}{r+1} - \frac{1}{2}$, and consider two geodesic rays $\gamma_\pm \colon [0, \infty) \to \HH^2$ given by
    \[ \gamma_\pm(t) = \exp_{\analyticcenter}(\pm (t - r)). \]
    Let $D_\pm$ be the two horoballs associated to these rays.
    In the Poincar\'e disk model (placing $\analyticcenter$ at the origin), the horoballs are two Euclidean balls tangent to the unit circle, with non-empty intersection: see \cref{fig:test2}.
    Let $D = D_+ \cap D_-$.
    From \cref{fig:test2}, it is immediate that $D$ contains a ball of radius $r$ centered at $\analyticcenter$.
    From that figure and a simple calculation in the Poincar\'e disk model,\footnote{Specifically: the \emph{Euclidean} radii of the inner and outer balls in the disk model are $\tanh(r/2)$ and $\tanh(R/2)$, respectively.  Euclidean geometry then tells us that $\tanh(R/2) = \sqrt{\tanh(r/2)}$.  Solving for $R$ yields $R = \arcosh(e^r)$.} we also find that $D$ is contained in a ball of radius $R = \arcosh(e^r) \leq r+1$ centered at $\analyticcenter$.

    By~\ref{P4}, $D$ has a $2\theta$-barrier~$F$.
    Note that $D$ has reflection symmetries along two orthogonal geodesics (the horizontal and vertical axes in \cref{fig:test2}).
    Applying these symmetries to $F$ generates four $2\theta$-barriers for $D$, whose sum is an $8\theta$-barrier for~$D$ (again by~\ref{P4}) with analytic center $\analyticcenter$.
    Thus \cref{thm:ball_like} yields $8\theta \geq \frac{1}{2^6}\cdot \sqrt{r} \cdot \frac{r}{r+1} - \frac{1}{2}$, which is the desired contradiction.
\end{proof}

Now we move on to general horospherically convex sets.
The idea is to bound the set by a supporting horoball, and then use reflections as in the proof of \cref{cor:horoball}.
\begin{proof}[Proof of \cref{cor:horoconvexset}]
    Let $D$ be a horospherically convex set and suppose that there exists $\analyticcenter \in D$ and $r > 0$ with $B(\analyticcenter, r) \subseteq D$.
    Let $\tilde{r} \geq r$ be the radius of the largest ball centered at $p_*$ contained in $D$ ($\tilde{r}$ is finite since $D \neq \HH^2$).
    The closed ball $\bar{B}(\analyticcenter, \tilde{r})$ must intersect the boundary of $D$ at some point $q$.
    Since $D$ is horospherically convex, there is a supporting horoball $D_+ \supseteq D$ whose boundary contains $q$.

    Let $\gamma$ be the geodesic line passing through $\analyticcenter$ and $q$, and let $\gamma_\perp$ be the geodesic line passing through $\analyticcenter$ and orthogonal to $\gamma$.
    Reflect $D_+$ across $\gamma_\perp$ to obtain a new horoball $D_-$.
    Using the computation from the proof of \cref{cor:horoball}, $D_+ \cap D_-$ is contained in $B(\analyticcenter, R)$ where $R = \arcosh(e^{\tilde{r}}) \leq \tilde{r}+1 \leq 2 \tilde{r}$.

    Consider the group of four symmetries obtained from reflections across $\gamma$ and $\gamma_\perp$.
    Apply these symmetries to $D$ to obtain $D_0 = D, D_1, D_2, D_3$.
    Then $D_0 \cap D_1 \cap D_2 \cap D_3$ is contained in $D_+ \cap D_- \subseteq B(p, R)$, and also contains $B(p, \tilde{r})$.
    Moreover, if $D$ has a $\theta$-barrier, then $D_0 \cap D_1 \cap D_2 \cap D_3$ has a $4\theta$-barrier whose analytic center is $\analyticcenter$.
    Applying \cref{thm:ball_like}, we deduce
    \[
    4 \theta \geq \frac{1}{2^6} \cdot \sqrt{\tilde{r}} \cdot \frac{\tilde{r}}{R} - \frac{1}{2} \geq \frac{1}{2^7} \sqrt{\tilde{r}} - \frac{1}{2} \geq \frac{1}{2^7} \sqrt{r} - \frac{1}{2}. \qedhere
    \]
\end{proof}

%=============================================================================
\section{Perspectives}\label{Sec:futuredirections}
%=============================================================================
Our results show that negative curvature imposes general obstructions on the existence of good barriers for IPMs.
In particular, IPMs based on barriers for the ball are insufficient to solve challenging g-convex optimization problems such as tensor scaling in polynomial time, when the diameter bounds are exponential in the input size.
This opens several potential directions for designing better IPMs:
\begin{itemize}
\item
\textbf{Can one construct self-concordant barriers with low barrier parameter for sublevel sets of Kempf--Ness functions that are bounded below?%
\footnote{This is the setting relevant for scaling problems~\citep{burgissernoncommutativeoptimization}, and excludes Busemann functions (which are not bounded below), whose sublevel sets are horoballs and hence do not admit good barriers (\cref{cor:horoball}).}}
It is known that the sublevel sets of Kempf--Ness functions can have diameter exponential in the input size~\citep{franksreichenback2021}. However, it is unknown whether these sets contain large inner balls---a necessary condition for applying~\cref{thm:ball_like}.
A natural starting point for addressing this question is to explicitly construct a self-concordant barrier for \emph{commutative} scaling problems whose barrier parameter does not depend on the number of weights. While the existence of such barriers is known (e.g., the entropic barrier), no explicit construction is currently available.

\item \label{direction2} \textbf{An alternative---though more speculative---approach is to develop a theory of IPMs based on horospherical convexity, rather than geodesic convexity.}
Horospherically convex optimization admits query complexity bounds that are independent of curvature~\citep{lewis2024horoballs,goodwin2024subgradient,criscitiello2025horosphericallyconvexoptimizationhadamard}.
In particular, horospherically convex functions can exhibit constant condition numbers (independent of the domain's radius) if the condition number is defined in a horospherical---not geodesic---sense; see~\citep[Sec 3.3]{criscitiello2025horosphericallyconvexoptimizationhadamard} and contrast with \citep[Thm.~6]{hamilton2021nogo}.
This motivates finding horospherical definitions of barriers that can be well-behaved even for large horospherically convex domains such as balls.

To apply such a hypothetical theory to scaling problems, one would need to establish that Kempf--Ness functions can be reformulated in the horospherical setting.
There is some precedent: certain nontrivial scaling problems---such as Horn's problem---already fit naturally into the horospherical convexity framework~\citep[Sec.~4.1]{criscitiello2025horosphericallyconvexoptimizationhadamard}.
\end{itemize}

%=============================================================================
\section*{Acknowledgments}
%=============================================================================
MW thanks the Simons Institute for the Theory of Computing at Berkeley and Q-FARM at Stanford University for hospitality.

%=============================================================================
% \section*{Statements and Declarations}
\section*{Funding}
%=============================================================================
%\textbf{Funding:} 
CC was supported by the Swiss State Secretariat for Education, Research and Innovation (SERI) under contract number MB22.00027.
HN acknowledges financial support from the European Research Council (ERC Grant Agreement
No. 818761 and QInteract, Grant No. 101078107) and VILLUM FONDEN via the QMATH Centre of Excellence (Grant No. 10059).
MW acknowledges support by the European Research Council (ERC Grant SYMOPTIC, 101040907), the German Federal Ministry of Research, Technology and Space (QuSol, 13N17173), and the Deutsche Forschungsgemeinschaft (DFG, German Research Foundation, 556164098).

% \medskip

% \noindent \textbf{Conflicts of interest:} The authors have no relevant financial or non-financial interests to disclose.

\appendix
%=============================================================================
\section{Regular polygons}\label{sec:regular-n-gons}
%=============================================================================
Let~$n \geq 3$. In this section, we prove the following.
\begin{theorem}
    \label{thm:regular-n-gon}
    Let $D$ be the interior of a regular $n$-gon in the hyperbolic plane with side lengths~$L > 0$. If~$F$ is a~$\theta$-barrier for~$D$, then~$\theta \geq \frac{L}{32 \log(n)} - \frac{1}{8}$.
\end{theorem}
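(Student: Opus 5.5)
The plan is to reuse the mechanism of the proof of \cref{thm:triangle}, with two changes: symmetrize with a group whose size does not depend on $n$, and replace the $O(1)$ bound on the inradius by a bound of order $\log(n)$. The constant $\frac{1}{8} = \frac{1}{2 \cdot 4}$ suggests symmetrizing with a group of four isometries, so the target chain of inequalities is
\[ 2 \cdot 4\theta + 1 \geq \frac{L}{4\log(n)} . \]
Symmetrizing with the full rotation group $\ZZ/n\ZZ$ would lose a factor $n$, so that route is ruled out.

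\textbf{Step 1: geometry.} Let $c$ be the center of the regular $n$-gon and $m$ the midpoint of a side. The right triangle $c\,m\,v$, where $v$ is an adjacent vertex, has angle $\pi/n$ at $c$. Hyperbolic trigonometry in this triangle gives the inradius $\rho$ via
\[ \sinh(\rho) = \frac{\tanh(L/2)}{\tan(\pi/n)} \leq \frac{n}{\pi}\cdot\frac{\pi}{n}\cot(\pi/n)\cdot\frac{n}{\pi}\ \text{(schematically)}, \quad\text{so } \rho \leq \log(n) + O(1), \]
and, more carefully, $\rho \leq 2\log(n)$ for all $n \geq 3$. The circumradius satisfies $d(c,v) \geq L/2$ by the same computation as in the triangle case. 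Consequently, every point of $D$ is within distance $\rho$ of $\partial D$ (for instance along the segment to the nearest side). On the other hand, every line of symmetry through $c$ meets $D$ in a segment of length at least $L$.

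\textbf{Step 2: symmetrization by two orthogonal reflections.} Choose two orthogonal symmetry axes $\ell, \ell_\perp$ of the polygon through $c$; for even $n$ these exist. Summing $F$ over the Klein four-group they generate gives, by \ref{P4}, a $4\theta$-barrier $\tilde F$. Its gradient at $c$ vanishes, so $c$ is the analytic center of $\tilde F$. Moreover, its Hessian at $c$ is diagonal in the orthonormal basis $(e_1,e_2)$ tangent to $(\ell,\ell_\perp)$:
\[ (\nabla^2\tilde F)_c(w,w) = a\,w_1^2 + b\,w_2^2 . \]

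\textbf{Step 3: the two directions.} Let $q$ be the nearest boundary point to $c$ and $u = \log_c(q)$, so that $\|u\| = \rho$. By \ref{P2}, $a u_1^2 + b u_2^2 \geq 1$, hence $\max(a,b) \geq 1/\rho^2$. Suppose, say, $a \geq 1/\rho^2$. The segment $\ell \cap D$ reaches distance at least $L/2$ from $c$ in direction $e_1$. Then \ref{P3} gives
\[ a\,(L/2)^2 \leq (8\theta+1)^2, \qquad\text{hence}\qquad 8\theta + 1 \geq \frac{L}{2\rho} \geq \frac{L}{4\log(n)} , \]
which is the claimed bound. The case $b \geq 1/\rho^2$ is identical, because $\ell_\perp$ also meets $D$ in a long segment.

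\textbf{Main obstacle.} I expect the main difficulty to be twofold. First, when $n$ is even, one must check that the two axes $\ell$ and $\ell_\perp$ both pass at distance $\Omega(L)$ from $c$ through $D$. One axis runs vertex-to-vertex, while the other may run edge-midpoint to edge-midpoint, and that axis only has half-length $\rho$. So the cleaner choice is to take axes through vertices, which requires $4 \mid n$, or else to rotate slightly. Second, odd $n$ has no pair of orthogonal symmetry axes. For odd $n$, I would first try a single reflection, which puts the analytic center on $\ell$. I would then handle the off-diagonal Hessian term by a $2\times 2$ Cauchy--Schwarz estimate, using two far points of $D$ that are symmetric with respect to $\ell$, namely vertices adjacent to the far end of the axis; this may cost additional constant factors.
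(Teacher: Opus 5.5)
\textbf{Gap: your proof is complete only for $4\mid n$; the cases $n\equiv 2 \pmod 4$ and odd $n$ each need an idea the proposal does not supply.}

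\textbf{The case $n\equiv 2\pmod 4$.} When $4\mid n$ your argument is fine. Both orthogonal axes then pass through vertices, and your Klein four-group symmetrization with the dichotomy $\max(a,b)\ge 1/\rho^2$ substitutes well for the paper's $\ZZ/4\ZZ$ rotation, which gives $(\nabla^2\tilde F)_c=\lambda I$ directly. However, the claim in Step~1 that every symmetry line through $c$ meets $D$ in a long segment is false for even $n$. Half of the axes join opposite side midpoints and have half-length only $\rho$.

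When $n\equiv 2\pmod 4$, any two orthogonal symmetry axes consist of one vertex axis $\ell$ and one such short axis $\ell_\perp$. In the case $b\ge 1/\rho^2$, Step~3 then breaks down: \ref{P3} along $\ell_\perp$ only yields $b\rho^2\le(8\theta+1)^2$, which is vacuous. ``Rotating slightly'' is not available, because the reflections must be symmetries of $D$.

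The missing idea is to keep your symmetrization but apply \ref{P2} and \ref{P3} along two off-axis directions at angle roughly $\pi/4$ to both axes. Label the vertices consecutively from $p_1\in\ell$, and let $q$ be the boundary point on $\ell_\perp$. Take $u$ towards the vertex $p_j$ and $v$ towards the midpoint $q'$ of the side $p_{j-1}p_j$, with $j=\lceil n/8\rceil+1$. These two directions differ by only $\pi/n$ and stay away from both axes, so their squared coordinates in the eigenbasis are comparable:
$\cos^2\angle_c(q,p_j)\ge\cos^2\angle_c(q,q')$ and $\cos^2\angle_c(p_1,p_j)\ge\frac14\cos^2\angle_c(p_1,q')$.
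Hence $(\nabla^2\tilde F)_c[u,u]\ge\frac14(\nabla^2\tilde F)_c[v,v]$, whatever $a$ and $b$ are. Combining $R_n^2(\nabla^2\tilde F)_c[u,u]\le(8\theta+1)^2$ with $r_n^2(\nabla^2\tilde F)_c[v,v]\ge 1$ gives $8\theta+1\ge R_n/(2r_n)$. Because of the lost factor $2$, you then need the sharper inradius bound $\rho=r_n\le\arsinh(\cot(\pi/n))\le\log n$, not $2\log n$.

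\textbf{The odd case.} Here your proposed fix addresses a non-issue and misses the real one. After adding $F\circ O$ for a reflection $O$ fixing $\ell$, the analytic center lies on $\ell$. The Hessian there is invariant under $dO$, hence already diagonal in the basis tangent and normal to $\ell$, so there is no off-diagonal term to handle.

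What you do not control is \emph{where} on $\ell$ the center sits. If it lies on the vertex side of $c$ at distance $s$, the axis direction only gives $4\theta+1\ge (R_n-s)/(r_n+s)$. That is $O(1)$ once $s$ is comparable to $R_n$, and your proposal offers no estimate covering all such positions.

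The paper avoids this by choosing the axis adaptively:
\begin{itemize}
\item Pick a side midpoint $q_i$ with $\langle\nabla F(c),\log_c(q_i)\rangle\le 0$. This is possible because the $n$ unit directions from $c$ to the side midpoints sum to zero.
\item By convexity of $F$ along the line through $q_i$ and the opposite vertex $\hat p_i$, the minimizer of $F$ on that line lies on the segment from $c$ to $q_i$.
\item That point is the analytic center of $F+F\circ O_i$: on the line this function equals $2F$, and its normal derivative vanishes by symmetry.
\item Now apply \ref{P2} towards $q_i$ (distance at most $r_n$) and \ref{P3} towards $\hat p_i$ (distance at least $R_n$), both along the same line.
\end{itemize}
Since $F+F\circ O_i$ is only a $2\theta$-barrier, this gives $4\theta+1\ge R_n/r_n$.
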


While the proof of~\cref{thm:triangle} extends in a straightforward way to this setting, that argument yields a weaker bound in which~$\theta$ scales linearly in~$1/n$, rather than~$1/\log n$ as in Theorem~\ref{thm:regular-n-gon}.

We distinguish two cases corresponding to the parity of~$n$. In both cases, we symmetrize the barrier~$F$ to control the position of the analytic center.

\begin{itemize}
\item \textbf{Odd case:} We use a single reflection to control the location of the analytic center.
We then identify a geodesic passing through the analytic center such that one side of the geodesic remains in the domain for a ``long'' time, while the opposite side remains in the domain for only a ``short'' time.

\item \textbf{Even case:} To control the analytic center, we apply either four rotations when $4 \mid n$ (i.e., $4$ divides $n$), or two orthogonal reflections when $n \geq 6$.
In this setting, we construct two geodesics through the analytic center whose Hessian evaluations are comparable, but where one stays in the domain for a long time and the other for only a short time.
\end{itemize}
Theorem~\ref{thm:regular-n-gon} follows immediately from Corollary~\ref{cor:odd-n-gon-lower-bound} (the odd case), and Propositions~\ref{prop:4dividesn} and~\ref{prop:bound-ngeq6-even} (the even case).

%-----------------------------------------------------------------------------
\subsection{Trigonometry of regular \texorpdfstring{$n$}{n}-gons}
%-----------------------------------------------------------------------------
We first establish a simple geometric fact on how close the sides of a regular~$n$-gon come to its centroid, analogous to the proof of~\cref{thm:triangle}.
\begin{lemma}
    \label{lem:regular-n-gon-distance-to-center}
    Let~$p_1, \dotsc, p_n$ be the vertices of a regular~$n$-gon with side lengths~$L > 0$.
    Let~$c$ be the center of the regular~$n$-gon.
    Then, for any~$i \in [n]$,
    the distance from~$c$ to~$p_i$ satisfies
    \[
        R_n \coloneqq d(c, p_i)
        = \arsinh \frac{\sinh(L/2)}{\sin(\pi/n)}
        % = \arcosh \sqrt{\frac{\cosh(L) - \cos(\frac{2\pi}{n})}{1 - \cos(\frac{2\pi}{n})}},
    \]
    and the distance between~$c$ and the line segment from~$p_i$ to~$p_{i+1}$ satisfies
    \[
        r_n \coloneqq d(c, p_i p_{i+1})
        = \arsinh \frac{\tanh(L/2)}{\tan(\pi/n)}
        % = \arcosh \sqrt{\frac{\cosh(L) - \cos(\frac{2\pi}{n})}{(1 - \cos(\frac{2\pi}{n})) \cosh(L/2)^2}}.
    \]
\end{lemma}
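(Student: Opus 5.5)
The plan is to decompose the regular $n$-gon into $2n$ congruent right triangles and apply right-angled hyperbolic trigonometry to one of them. Let $m$ be the midpoint of the side $p_i p_{i+1}$. By the rotational symmetry of order $n$ about $c$, which permutes the vertices cyclically, all $d(c,p_i)$ are equal. The triangle $c\,p_i\,p_{i+1}$ is therefore isosceles, with apex angle $2\pi/n$ at $c$. The reflection across the geodesic through $c$ and $m$ swaps $p_i$ and $p_{i+1}$, so the segment $cm$ bisects the apex angle and meets $p_ip_{i+1}$ orthogonally. Hence the triangle $c\,m\,p_i$ has a right angle at $m$, angle $\pi/n$ at $c$, leg $d(m,p_i)=L/2$, hypotenuse $R_n=d(c,p_i)$, and other leg $d(c,m)$.

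For $R_n$, we use the hyperbolic law of sines in the right triangle: the side opposite an angle satisfies $\sinh(\text{opposite}) = \sinh(\text{hypotenuse})\sin(\text{angle})$. With the opposite side $L/2$ and angle $\pi/n$ this reads $\sinh(L/2)=\sinh(R_n)\sin(\pi/n)$, which gives the stated formula. If only the law of cosines~\eqref{eq:hyperpythag} is to be used, we can instead argue as in the proof of \cref{thm:triangle}. Apply the law of cosines to the isosceles triangle $c\,p_i\,p_{i+1}$ to get
\[\cosh L = \cosh^2 R_n - \sinh^2 R_n \cos(2\pi/n) = 1 + 2\sinh^2 R_n \sin^2(\pi/n),\]
and $\cosh L - 1 = 2\sinh^2(L/2)$ then yields the same identity.

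For $r_n$, we first check that $d(c, p_ip_{i+1}) = d(c,m)$. This holds because $m$ is the foot of the perpendicular from $c$ to the geodesic through $p_i$ and $p_{i+1}$, and $m$ lies on the segment; in a Hadamard manifold the orthogonal foot realizes the distance to a geodesic. Next we use the right-triangle relation $\tanh(\text{adjacent leg}) = \tanh(\text{hypotenuse})\cos(\text{angle})$ together with $\tan(\text{angle}) = \tanh(\text{opposite leg})/\sinh(\text{adjacent leg})$. The second relation gives $\tan(\pi/n) = \tanh(L/2)/\sinh(r_n)$, which is exactly $r_n = \arsinh\big(\tanh(L/2)/\tan(\pi/n)\big)$. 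To stay self-contained, we can derive it from Pythagoras, $\cosh R_n = \cosh(L/2)\cosh r_n$ (the case $\theta=\pi/2$ of \eqref{eq:hyperpythag}), combined with the formula for $\sinh R_n$ already obtained. From these,
\[\sinh^2 r_n = \frac{\cosh^2 R_n}{\cosh^2(L/2)} - 1 = \frac{1+\sinh^2(L/2)/\sin^2(\pi/n)}{\cosh^2(L/2)} - 1,\]
and this simplifies to $\tanh^2(L/2)\cot^2(\pi/n)$.

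The only step that needs any care is this last algebraic simplification. Writing $s=\sinh^2(L/2)$, we have $\cosh^2(L/2) = 1+s$, and the numerator minus the denominator is $s/\sin^2(\pi/n) - s = s\cot^2(\pi/n)$. Dividing by $1+s$ gives $\tanh^2(L/2)\cot^2(\pi/n)$. Everything else is routine symmetry and standard hyperbolic trigonometry.
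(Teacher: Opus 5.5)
Your proposal is correct and follows the paper's proof: both work in the right triangle with vertices $c$, the midpoint of $p_ip_{i+1}$, and $p_i$ (right angle at the midpoint, angle $\pi/n$ at $c$, leg $L/2$, hypotenuse $R_n$, other leg $r_n$), and both read off the formulas from $\sin(\pi/n)=\sinh(L/2)/\sinh(R_n)$ and $\tan(\pi/n)=\tanh(L/2)/\sinh(r_n)$ as in \cref{lem:hyperbolic-trigonometry-right-angles}; your quotient form of the tangent identity is the right one, whereas the paper's text has a product there by typo. Your alternative derivations via the law of cosines and Pythagoras, and your check that the foot of the perpendicular realizes $d(c,p_ip_{i+1})$, are also correct and make explicit what the paper takes for granted.
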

To prove the lemma, we use the following fact from hyperbolic trigonometry:
\begin{lemma}[{\cite[Ex.~3.5.4]{ratcliffeFoundationsHyperbolicManifolds2019}}]
  \label{lem:hyperbolic-trigonometry-right-angles}
  Consider a geodesic triangle in~$\HH^2$ with vertices~$q_1, q_2, q_3$, with side lengths~$d(q_1,q_2) = a$, $d(q_2,q_3) = b$ and~$d(q_1,q_3) = c$.
  Assume~$\angle_{q_2}(q_1, q_3) = \pi/2$, and write~$\alpha = \angle_{q_3}(q_1,q_2)$.
  Then
  \[
    \cos \alpha = \frac{\tanh(b)}{\tanh(c)}, \quad \sin \alpha = \frac{\sinh(a)}{\sinh(c)}, \quad \tan \alpha = \frac{\tanh(a)}{\sinh(b)}.
  \]
\end{lemma}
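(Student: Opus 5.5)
The statement to prove is \cref{lem:hyperbolic-trigonometry-right-angles}, the standard right-angled-triangle identities. My plan is to derive all three from the hyperbolic law of cosines~\eqref{eq:hyperpythag} together with its right-angle specialization (the hyperbolic Pythagorean theorem). Since $\angle_{q_2} = \pi/2$, applying \eqref{eq:hyperpythag} at $q_2$ gives
\[ \cosh c = \cosh a \cosh b. \]
Applying \eqref{eq:hyperpythag} at the vertex $q_3$, where the angle is $\alpha$ and the adjacent sides are $b$ and $c$, gives
\[ \cosh a = \cosh b \cosh c - \sinh b \sinh c \cos\alpha. \]

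First I solve the second identity for $\cos\alpha$ and substitute $\cosh a = \cosh c/\cosh b$. This yields
\[ \cos\alpha = \frac{\cosh b\cosh c - \cosh c/\cosh b}{\sinh b\sinh c} = \frac{\cosh c(\cosh^2 b - 1)}{\cosh b \sinh b \sinh c} = \frac{\tanh b}{\tanh c}, \]
which is the first claimed formula.

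Next I compute $\sin^2\alpha = 1-\cos^2\alpha$. Clearing denominators gives
\[ \sin^2\alpha = \frac{\sinh^2 c\cosh^2 b - \sinh^2 b\cosh^2 c}{\sinh^2 c\,\cosh^2 b}. \]
I then simplify the numerator using $\sinh^2 = \cosh^2 - 1$. This gives $\cosh^2 c - \cosh^2 b$. Substituting $\cosh c=\cosh a\cosh b$, this becomes $\cosh^2 b(\cosh^2 a - 1) = \cosh^2 b \sinh^2 a$. Hence $\sin^2\alpha = \sinh^2 a/\sinh^2 c$. Since $\alpha\in(0,\pi)$ and all side lengths are positive, taking positive square roots gives $\sin\alpha = \sinh a/\sinh c$.

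Finally, the third formula follows by division:
\[ \tan\alpha = \frac{\sinh a\,\tanh c}{\sinh c\,\tanh b} = \frac{\sinh a \cosh b}{\cosh c\,\sinh b}. \]
Using $\cosh c = \cosh a\cosh b$ once more, this equals $\tanh a/\sinh b$.

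I expect no real obstacle here; the only care needed is the sign and square-root bookkeeping in the $\sin\alpha$ step. An alternative would be simply to cite the reference, as the paper does, but the derivation above is short and self-contained given \eqref{eq:hyperpythag}.
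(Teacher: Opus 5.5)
Your derivation is correct. Both applications of \eqref{eq:hyperpythag} use the right sides: legs $a,b$ at $q_2$, and adjacent sides $b,c$ with opposite side $a$ at $q_3$. The algebra checks out, both for $\cos\alpha=\tanh b/\tanh c$ and for the numerator $\cosh^2 c-\cosh^2 b=\cosh^2 b\,\sinh^2 a$. The positive square root for $\sin\alpha$ is justified: a right angle at $q_2$ forces the triangle to be nondegenerate, so $a,b,c>0$ and $\alpha\in(0,\pi)$. Positivity of $b,c$ is also what licenses your divisions by $\sinh b\,\sinh c$ and $\tanh c$. Dividing $\sin\alpha$ by $\cos\alpha$ in the last step is legitimate because your first formula already shows $\cos\alpha>0$; that is worth saying explicitly.

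The paper gives no argument of its own for this lemma; it only cites Ratcliffe's Ex.~3.5.4. So your proof is a self-contained replacement rather than a reconstruction of something in the text. A common alternative for the middle identity is the hyperbolic law of sines, which gives $\sin\alpha/\sinh a=\sin(\pi/2)/\sinh c$ in one line. Your route needs only the law of cosines, which the paper already states in its preliminaries, so the appendix then relies on nothing beyond \eqref{eq:hyperpythag}. The cost is a few lines of algebra; the citation buys brevity.
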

\begin{proof}[Proof of~\cref{lem:regular-n-gon-distance-to-center}.]
    Let~$q_i$ be the midpoint of~$p_i$ and~$p_{i+1}$.
    Then~$d(c, q_i) = d(c, p_i p_{i+1}) = r_n$, $d(q_i, p_i) = L/2$ and~$d(c, p_i) = R_n$.
    Applying~\cref{lem:hyperbolic-trigonometry-right-angles} to the hyperbolic triangle~$c q_i p_i$, and using that~$\angle_c(q_i, p_i) = \pi/n$, we obtain the desired result immediately from the identities~$\sin(\pi/n) = \sinh(L/2) / \sinh(R_n)$ and~$\tan(\pi/n) = \tanh(L/2) \sinh(r_n)$.
    % The isosceles triangle~$p_i c p_{i+1}$ has angle~$2\pi/n$ at~$c$ and $p_i p_{i+1}$ has side length~$L$, so the hyperbolic law of cosines yields
    % \begin{align*}
    %     \cosh(L) & = \cosh(d(c,p_i)) \cosh(d(c,p_{i+1})) - \sinh(d(c,p_i)) \sinh(d(c,p_{i+1})) \cos(\frac{2\pi}{n}) \\
    %     & = \cosh(d(c,p_i))^2 - \sinh(d(c,p_i))^2 \cos(\frac{2\pi}{n}) \\
    %     & = \cosh(d(c,p_i))^2 - (1 - \cosh(d(c,p_i))^2) \cos(\frac{2\pi}{n}),
    % \end{align*}
    % hence
    % \[
    %     (1 - \cos(\frac{2\pi}{n})) \cosh(R_n)^2 = \cosh(L) - \cos(\frac{2\pi}{n}),
    % \]
    % which yields the desired expression for~$R_n$.
    % Next, let $q_i$ be the midpoint of~$p_i p_{i+1}$, then~$r_n = d(c, p_i p_{i+1}) = d(c, q_i)$ and
    % \[
    %     \cosh(R_n) = \cosh(r_n) \cosh(L/2),
    % \]
    % so
    % \[
    %     \cosh(r_n)^2 = \frac{\cosh(L) - \cos(\frac{2\pi}{n})}{(1 - \cos(\frac{2\pi}{n})) \cosh(L/2)^2}.
    % \]
\end{proof}

The following bounds for~$R_n$ and~$r_n$ follow immediately from Lemma~\ref{lem:hyperbolic-trigonometry-right-angles}; we omit the proof.
\begin{lemma}\label{lem:boundsonRr}
    Under the setting of Lemma~\ref{lem:regular-n-gon-distance-to-center}, we have
    \[
    R_n \geq \frac{L}{2}, \quad r_n \leq \arsinh(1/\tan(\pi/n)) \leq \log(n).
    \]
\end{lemma}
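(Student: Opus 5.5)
Both bounds follow directly from the closed-form expressions in \cref{lem:regular-n-gon-distance-to-center}, using that $\sinh$ and $\arsinh$ are increasing. There is no real obstacle; the only step needing care is the final $\log(n)$ estimate, which I would settle with a half-angle identity.

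\textbf{The bound on $R_n$.} By \cref{lem:regular-n-gon-distance-to-center}, $\sinh(R_n) = \sinh(L/2)/\sin(\pi/n)$. Since $0 < \sin(\pi/n) \leq 1$, this gives $\sinh(R_n) \geq \sinh(L/2)$. Monotonicity of $\sinh$ then yields $R_n \geq L/2$.

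\textbf{The first bound on $r_n$.} By the same lemma, $\sinh(r_n) = \tanh(L/2)/\tan(\pi/n)$. For $n \geq 3$ we have $\tan(\pi/n) > 0$, and $\tanh(L/2) < 1$, so $\sinh(r_n) \leq 1/\tan(\pi/n)$. Applying the increasing map $\arsinh$ gives $r_n \leq \arsinh(1/\tan(\pi/n))$.

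\textbf{The $\log(n)$ bound.} Write $x = \cot(\pi/n) > 0$ and use $\arsinh(x) = \log\bigl(x + \sqrt{x^2+1}\bigr)$. Since $\sqrt{x^2+1} = 1/\sin(\pi/n)$, the half-angle identity gives
\[
  x + \sqrt{x^2+1} = \frac{1 + \cos(\pi/n)}{\sin(\pi/n)} = \cot\Big(\frac{\pi}{2n}\Big).
\]
So it suffices to show $\tan(\pi/(2n)) \geq 1/n$. This holds because $\tan y \geq y$ on $[0, \pi/2)$, so $\tan(\pi/(2n)) \geq \pi/(2n) \geq 1/n$. Taking logarithms yields $\arsinh(1/\tan(\pi/n)) \leq \log(n)$.
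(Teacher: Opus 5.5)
Your proof is correct and follows essentially the route the paper has in mind: the paper omits the proof, saying the bounds follow immediately from the right-angle trigonometry, which is exactly what the closed forms $\sinh R_n = \sinh(L/2)/\sin(\pi/n)$ and $\sinh r_n = \tanh(L/2)/\tan(\pi/n)$ encode. Your identity $\cot(\pi/n) + \csc(\pi/n) = \cot(\pi/(2n))$, combined with $\tan y \geq y$, settles the $\log(n)$ step cleanly, and in fact gives the slightly sharper bound $\log(2n/\pi)$.
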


%-----------------------------------------------------------------------------
\subsection{The odd case}
%-----------------------------------------------------------------------------
Let~$n \geq 3$ be odd, and let~$D$ be a regular~$n$-gon in~$\HH^2$ with side length~$L$.
Let $c$ be the centroid of the regular $n$-gon with side lengths~$L$ and vertices~$p_1, \dotsc, p_n$.
Let~$q_i$ be the midpoint of~$p_i$ and~$p_{i+1}$.
Let~$\hat{p}_i$ be the point opposite~$q_i$.
\begin{proposition}\label{helperprop}
    There is an~$i \in [n]$ such that~$F^i(x) = F(x) + F(O_i x)$ has analytic center on the geodesic segment from~$c$ to~$q_i$, where~$O_i$ is the reflection of~$\HH^2$ which preserves the geodesic passing through~$q_i$ and~$\hat{p}_i$.
\end{proposition}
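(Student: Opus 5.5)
For each $i$, the function $F^i(x) = F(x) + F(O_i x)$ is a $2\theta$-barrier for $D$ by~\ref{P4}, since $O_i$ is a symmetry of $D$: for odd $n$ the line through $q_i$ and $\hat p_i$ is an axis of symmetry of the regular $n$-gon. It also has a unique analytic center $x_i$, because $D$ is bounded. Since $F^i$ is $O_i$-invariant and its minimizer is unique, $x_i$ is fixed by $O_i$. Hence $x_i$ lies on the axis $\ell_i$ of $O_i$, the geodesic through $\hat p_i$, $c$ and $q_i$. The plan is to show that for some $i$ the point $x_i$ lies on the half $[c,q_i]$ of $\ell_i \cap D$ rather than on $(c,\hat p_i]$.

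The key step is to describe $x_i$ through $F$ alone. Restricted to $\ell_i$, we have $F^i|_{\ell_i} = 2F|_{\ell_i}$, since $O_i$ fixes $\ell_i$ pointwise. So $x_i$ is the minimizer over $\ell_i\cap D$ of the one-dimensional convex function $F|_{\ell_i}$. Now let $p_*$ be the analytic center of $F$ and $g = \nabla F(c)$. Parametrize $\ell_i$ by unit speed starting at $c$, with the positive direction $e_i \in T_c\HH^2$ pointing towards $q_i$. The derivative at $c$ of $t\mapsto F(\exp_c(te_i))$ is $\langle g, e_i\rangle$. If $\langle g, e_i\rangle \le 0$, convexity of $F|_{\ell_i}$ implies that its minimizer lies at $t\ge 0$, that is, on the segment from $c$ towards $q_i$ (and inside $D$, so on $[c,q_i)$). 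So it suffices to find $i$ with $\langle g, e_i\rangle \le 0$.

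The unit vectors $e_1,\dots,e_n$ point from $c$ towards the edge midpoints. For odd $n$ they are $n$ equally spaced directions, at angles $2\pi k/n$ after relabeling, because the rotation by $2\pi/n$ about $c$ permutes the midpoints. If $g=0$, any $i$ works. Otherwise we use that equally spaced unit vectors sum to zero: $\sum_i e_i = 0$, so $\sum_i \langle g,e_i\rangle = 0$, and some $i$ has $\langle g,e_i\rangle\le 0$. Alternatively, some $e_i$ lies within angle $\pi/n<\pi/2$ of $-g$.

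I expect the main obstacle to be bookkeeping rather than substance. First, we must check that for odd $n$ the axis through $q_i$ and $\hat p_i$ (here $\hat p_i$ is a vertex) really passes through $c$ and is a reflection symmetry, so that $O_i$ preserves $D$. Second, we must justify that the constrained minimizer on $\ell_i$ coincides with the global minimizer of $F^i$. This follows from uniqueness plus $O_i$-invariance as above. The sign argument itself is routine.
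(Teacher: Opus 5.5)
Your proof is correct. It shares the paper's skeleton: choose $i$ with $\langle \nabla F(c), e_i\rangle \le 0$, then locate the minimizer of $F$ along the symmetry axis on the side of $q_i$. Where you differ is in how you identify that one-dimensional minimizer with the analytic center of $F^i$.

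\emph{Your argument is global.} $F^i$ has a unique minimizer, by boundedness of $D$ and $\nabla^2 F^i \succ 0$. Invariance under $O_i$ forces this minimizer onto the fixed line $\ell_i$. The identity $F^i|_{\ell_i} = 2F|_{\ell_i}$ then reduces the question to a one-dimensional convexity check.

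\emph{The paper's argument is local and constructive.} It takes the minimizer $t_*$ of $F$ along the segment from $c$ to $q_i$, which exists because $F$ blows up at $q_i$, so $\langle \nabla F, \gamma_i'\rangle = 0$ there. It then uses that $dO_i$ at an axis point fixes the axial direction and negates the normal one. Hence the normal components of $\nabla F$ and $\nabla (F\circ O_i)$ cancel, giving $\nabla F^i(\gamma_i(t_*)) = 0$, and g-convexity makes this critical point the analytic center. The paper's stated identity $\nabla(F\circ O_i) = -\nabla F$ holds there precisely because the axial component vanishes at $t_*$.

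\emph{What each buys.} Your route avoids the tangent-space computation altogether, but it relies on knowing in advance that a unique global minimizer exists; both facts are available here. The paper's route produces the critical point directly and would not need boundedness of $D$. Your observation that $\sum_i e_i = 0$ also makes explicit why a suitable index $i$ exists, which the paper leaves implicit.
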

\begin{proof}
    Choose~$i \in [n]$ satisfying~$\langle\nabla F(c), \gamma_i'(0)\rangle \leq 0$, where $\gamma_i\colon [0,1]\to D$ is the geodesic from $c$ to $q_i$. Along~$\gamma_i$ we know~$F(\gamma_i(t)) \to \infty$ as~$t \to 1$, so it must achieve a minimum at some~$t_*$ on $[0,1)$. We then have~$\langle \nabla F(\gamma_i(t_*)), \gamma_i'(t_*) \rangle = 0$.
    Since the reflection~$O_i$ preserves~$\gamma_i$ and applies a reflection on the tangent space~$T_{\gamma_i(t)} \HH^2$, we deduce that~$\nabla (F \circ O_i)(\gamma_i(t)) = -\nabla F(\gamma_i(t))$, and therefore~$\nabla F^i(\gamma_i(t)) = 0$.
\end{proof}

\begin{corollary}
    \label{cor:odd-n-gon-lower-bound}
    Let~$F$ be a~$\theta$-barrier for~$D$.
    Then~$4 \theta + 1 \geq R_n/r_n \geq \frac{1}{2} L / \log(n)$.
\end{corollary}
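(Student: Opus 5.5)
The argument follows the proof of \cref{thm:triangle}, applied to the symmetrized barrier of \cref{helperprop}. By \cref{helperprop}, choose $i \in [n]$ so that $F^i = F + F\circ O_i$ has its analytic center $\analyticcenter$ on the geodesic segment from $c$ to $q_i$. Since $O_i$ is a symmetry of $D$, $F \circ O_i$ is again a $\theta$-barrier for $D$, and by \ref{P4} $F^i$ is a $2\theta$-barrier. Now consider the geodesic through $q_i$, $c$ and $\hat p_i$. Since $n$ is odd, $\hat p_i$ is a vertex and this geodesic is the symmetry axis. Let $u = \log_{\analyticcenter}(q_i)$, a vector pointing along the axis toward $q_i$.

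The short direction is controlled by \ref{P2}. Since $q_i = \exp_{\analyticcenter}(u) \notin D$, we get $(\nabla^2 F^i)_{\analyticcenter}(u,u) \geq 1$. Here $\|u\| = d(\analyticcenter, q_i) \leq d(c,q_i) = r_n$, because $\analyticcenter$ lies on the segment from $c$ to $q_i$.

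The long direction is controlled by \ref{P3}. Moving from $\analyticcenter$ in direction $-u$ along the axis we pass through $c$ and reach the vertex $\hat p_i$. Every point strictly before $\hat p_i$ lies in $D$, and $d(\analyticcenter, \hat p_i) \geq d(c,\hat p_i) = R_n$. So for every $t < R_n$ the point $\exp_{\analyticcenter}(-t\, u/\|u\|)$ lies in $D$, and \ref{P3} for the $2\theta$-barrier $F^i$ gives
\[ \frac{t^2}{\|u\|^2} (\nabla^2 F^i)_{\analyticcenter}(u,u) \leq (4\theta+1)^2 . \]
Letting $t \to R_n$ and combining with the two facts above,
\[ (4\theta+1)^2 \geq \frac{R_n^2}{\|u\|^2} \geq \frac{R_n^2}{r_n^2}, \]
hence $4\theta + 1 \geq R_n/r_n$. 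Finally, \cref{lem:boundsonRr} gives $R_n \geq L/2$ and $r_n \leq \log n$, so $R_n/r_n \geq \frac{1}{2} L/\log(n)$.

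The main point to check is that $\analyticcenter$ lies between $c$ and $q_i$, so that the distance to the near side is at most $r_n$ while the distance to the far vertex is at least $R_n$. \Cref{helperprop} supplies exactly this placement, and the parity of $n$ is what makes the point opposite $q_i$ a vertex at distance $R_n$ from $c$ on the same axis. One detail: \ref{P3} is stated for points of $D$, and the vertex $\hat p_i$ is on the boundary, which is why the proof uses the limit $t \to R_n$ over interior points.
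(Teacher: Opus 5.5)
Your proof is correct and follows the paper's argument. You use \cref{helperprop} to obtain the $2\theta$-barrier $F^i$ with analytic center on the segment from $c$ to $q_i$, then combine \ref{P2} toward $q_i$ (distance at most $r_n$) with \ref{P3} toward the opposite vertex $\hat p_i$ (distance at least $R_n$). Your limit $t \to R_n$ is a small but welcome tightening, since the paper applies \ref{P3} directly at the boundary point $\hat p_i$, which is not in $D$.
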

\begin{proof}
    Let~$\analyticcenter$ be the analytic center of~$F^i$, where $i \in [n]$ is as in Proposition~\ref{helperprop}.
    The distance $d(\analyticcenter, q_i)$ is at most~$r_n$, and the distance~$d(\analyticcenter, \hat{p}_i)$ is at least~$R_n$.
    Now let~$u \in T_{\analyticcenter} \HH^2$ be the unit-speed direction pointing to~$\hat{p}_i$.
    Then~$\log_{\analyticcenter}(\hat{p}_i) = t_+ u$ and~$\log_{\analyticcenter}(q_i) = - t_- u$ for some~$t_+ \geq R_n$ and~$0 \leq t_- \leq r_n$.
    Then
    \[
        t_+^2 (\nabla^2 F^i)_{\analyticcenter}[u,u] \leq (4 \theta + 1)^2
    \]
    by~\ref{P3}, and
    \[
        t_-^2 (\nabla^2 F^i)_{\analyticcenter}[-u,-u] \geq 1
    \]
    by~\ref{P2}.
    Combining these estimates yields
    \[
    (4 \theta + 1)^2 \geq t_+^2 (\nabla^2 F^i)_{\analyticcenter}[u,u] = \frac{t_+^2}{t_-^2} \, t_-^2 (\nabla^2 F^i)_{\analyticcenter}[-u,-u] \geq \frac{t_+^2}{t_-^2} \geq \frac{R_n^2}{r_n^2}.
    \]
    Lastly, Lemma~\ref{lem:boundsonRr} gives $R_n/r_n \geq \frac{1}{2} L / \log(n)$.
\end{proof}

%-----------------------------------------------------------------------------
\subsection{The even case}
%-----------------------------------------------------------------------------
For even~$n$ we also exploit symmetries of the domain to control the location of the analytic center.
First, we deal with the case~$4 \mid n$.

\begin{proposition}\label{prop:4dividesn}
    Let~$n = 4k$, with $k$ a positive integer.
    Let~$F$ be a~$\theta$-barrier for~$D$.
    Then
    \[ 8 \theta + 1 \geq \frac{R_n}{r_n} \geq \frac{L}{2 \log(n)}. \]
\end{proposition}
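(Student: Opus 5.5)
We symmetrize $F$ under the four rotations by multiples of $\pi/2$ about the center $c$, which are symmetries of $D$ because $4 \mid n$. By \ref{P4}, $\tilde F(x) = \sum_{j=0}^{3} F(\rho^j x)$, with $\rho$ the rotation by $\pi/2$, is a $4\theta$-barrier for $D$. Its gradient at $c$ is the sum of the four rotated copies of $\nabla F(c)$, which vanishes. So, by g-convexity, $c$ is the analytic center of $\tilde F$. Moreover, $(\nabla^2 \tilde F)_c$ is a quadratic form on the two-dimensional space $T_c\HH^2$ that is invariant under rotation by $\pi/2$. Such a form must be a multiple of the metric, $(\nabla^2 \tilde F)_c = \lambda I$ for some $\lambda > 0$. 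This isotropy is exactly what will let us compare a "short" direction with a "long" direction, even though, unlike in the odd case, they do not lie on the same geodesic.

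Next we pick the two directions. Let $u$ be the unit vector at $c$ pointing to the midpoint $q_1$ of a side. Then $q_1 = \exp_c(r_n u) \in \partial D$, so $q_1 \notin D$, and \ref{P2} gives $r_n^2 \lambda = (\nabla^2 \tilde F)_c(r_n u, r_n u) \geq 1$. Let $w$ be the unit vector at $c$ pointing to a vertex $p_1$. Every point $\exp_c(t w)$ with $0 \le t < R_n$ lies in $D$, so \ref{P3} applied to $\tilde F$ gives $t^2 \lambda \le (2\cdot 4\theta+1)^2$; letting $t \to R_n$ gives $R_n^2\lambda \le (8\theta+1)^2$. Dividing the two inequalities yields $(8\theta+1)^2 \ge R_n^2/r_n^2$, that is, $8\theta + 1 \ge R_n/r_n$.

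The second inequality comes from \cref{lem:boundsonRr}, which gives $R_n \ge L/2$ and $r_n \le \log n$. Hence $R_n/r_n \ge L/(2\log n)$.

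The only step that needs care is justifying $(\nabla^2\tilde F)_c = \lambda I$. Since $\rho$ fixes $c$, we have $(\nabla^2 (F\circ\rho^j))_c(v,v) = (\nabla^2 F)_c(d\rho^j v, d\rho^j v)$, so $\tilde F$'s Hessian at $c$ is invariant under the rotation $d\rho$ by $\pi/2$. A symmetric $2\times 2$ matrix commuting with rotation by $\pi/2$ is scalar. Everything else is a direct application of \ref{P2}, \ref{P3} and \ref{P4}.
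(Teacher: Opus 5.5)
Your proof is correct and follows essentially the same route as the paper. Both arguments symmetrize over the $\ZZ/4\ZZ$ rotations about $c$ to get a $4\theta$-barrier whose analytic center is $c$ and whose Hessian there is $\lambda I$, then combine \ref{P2} at a side midpoint with \ref{P3} toward a vertex and finish with \cref{lem:boundsonRr}; your limit $t \to R_n$ (the vertex lies on $\partial D$, not in $D$) is slightly more careful than the paper's direct application of \ref{P3} at $p_1$.
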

\begin{proof}
    Let~$\mathbb{Z}/4\mathbb{Z}$ act on~$\mathbb{H}^2$ by rotation through an angle of~$\pi/2$ about the center~$c$ of~$D$.
    This action restricts to an action on~$D$, and its only fixed point is~$c$.
    Letting~$\tilde F(x) = \sum_{g \in \ZZ/4\ZZ} F(g \cdot x)$,~$\tilde F$ is a~$4\theta$-barrier for~$D$ with analytic center~$c$ by \ref{P4}.

    The action of~$\mathbb{Z}/4\mathbb{Z}$ on~$\mathbb{H}^2$ induces an action on~$T_c \mathbb{H}^2 \cong \mathbb{R}^2$ (rotation by~$\pi/2$), under which $(\nabla^2 \tilde{F})_c$ is invariant.
    Dualizing~$(\nabla^2 \tilde F)_c$ to the Hessian operator~$\Hess_c \tilde F$, represented by a real~$2 \times 2$ matrix in an orthonormal basis, we obtain the equation
    \[
        \begin{bmatrix}
            0 & -1 \\
            1 & 0
        \end{bmatrix}^\top
        \Hess_c \tilde F
        \begin{bmatrix}
            0 & -1 \\
            1 & 0
        \end{bmatrix}
        =
        \Hess_c \tilde F.
    \]
    This implies that $\Hess_c \tilde F = \lambda I$ for some~$\lambda \in \RR$.
    Moreover, $\lambda \ge 0$, since otherwise $\tilde{F}$ would fail to be convex; and in fact $\lambda > 0$ by~\ref{P3} together with the boundedness of~$D$.

    Let~$u, v \in T_c \HH^2$ be the unit-speed directions towards~$p_1$ and~$q$ respectively, where~$q$ is the midpoint of~$p_1$ and~$p_2$.
    Then
    \[
        \lambda d(c,p_1)^2 = d(c,p_1)^2 (\nabla^2 \tilde F)_c[u, u] = (\nabla^2 \tilde F)_c[\log_c(p_1), \log_c(p_1)] \leq  (8 \theta + 1)^2.
    \]
    by \ref{P3}.
    In the other direction, \ref{P2} implies that
    \[
        \lambda d(c,q)^2 = d(c,q)^2 (\nabla^2 \tilde F)_c[v, v] = (\nabla^2 \tilde F)_c[\log_c(q), \log_c(q)] \geq 1.
    \]
    We conclude that
    \[
        8 \theta + 1 \geq \sqrt{\lambda} d(c,p_1) \geq \frac{d(c,p_1)}{d(c,q)} = \frac{R_n}{r_n}.
    \]
    Lastly, Lemma~\ref{lem:boundsonRr} gives $R_n/r_n \geq \frac{1}{2} L / \log(n)$.
\end{proof}

We now turn to the bound for even~$n \geq 6$.
\begin{figure}
  \begin{center}
    \begin{minipage}{.3\textwidth}
      \input{Fig4_hexagon.tex}
    \end{minipage}
    \begin{minipage}{.35\textwidth}
      \input{Fig4_8gon.tex}
    \end{minipage}
    \begin{minipage}{.3\textwidth}
      \input{Fig4_10gon.tex}
    \end{minipage}
  \end{center}
  \caption{Schematic representations of the points used in the proof of~\cref{prop:bound-ngeq6-even} for $n=6$, $n=8$ and~$n=10$. The point~$c$ is the centroid of the regular~$n$-gon.  The point~$q$ is the geodesic midpoint of~$p_{\lfloor n/4 \rfloor + 1}$ and $p_{\lceil n/4 \rceil + 1}$.
    The point~$q'$ is the geodesic midpoint of~$p_{j-1}$ and~$p_j$, where $j = \lceil n/8 \rceil + 1$ is~$2$, $2$, $3$, respectively.
  The dashed lines indicate the directions~$u \sim \log_c(p_j), v \sim \log_c(q') \in T_c \HH^2$.}
\end{figure}
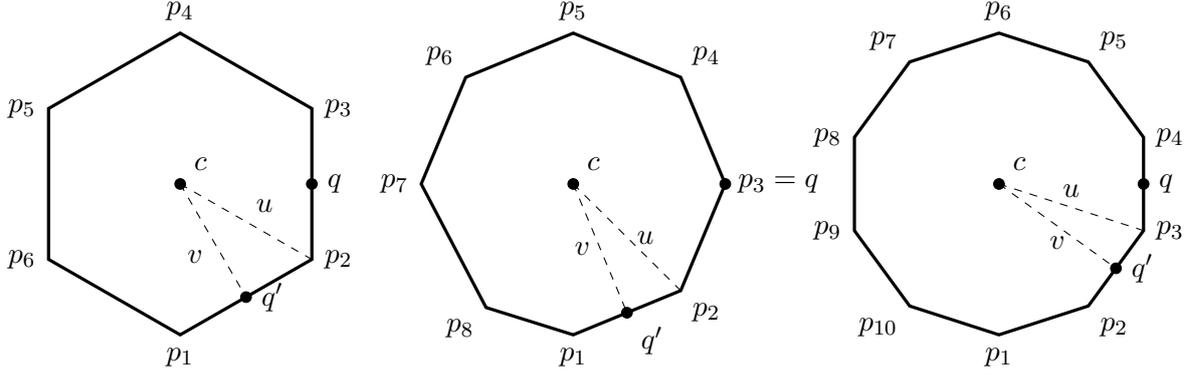
\begin{proposition}
  \label{prop:bound-ngeq6-even}
    Let~$n \geq 6$ be even.
    Let~$F$ be a~$\theta$-barrier for~$D$.
    Then
    \[ 8 \theta + 1 \geq \frac{R_n}{2 r_n} \geq \frac{L}{4 \log(n)}. \]
\end{proposition}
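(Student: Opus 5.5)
We symmetrize $F$ with two orthogonal reflections so that the analytic center is forced to be $c$. Because $n$ is even, the regular $n$-gon has reflection symmetries across the line through $c$ and a vertex, and across the line through $c$ and an edge midpoint. We pick two such axes that are orthogonal. When $n \equiv 2 \pmod 4$, the line through $p_1$ and its opposite vertex is orthogonal to the line through $c$ and $q$, the midpoint of $p_{\lfloor n/4\rfloor+1}p_{\lceil n/4\rceil+1}$. When $4\mid n$, we use two orthogonal vertex axes, or treat the case via \cref{prop:4dividesn}. Let $O_1,O_2$ be these reflections and set $\tilde F=\sum_{g\in\{1,O_1,O_2,O_1O_2\}}F\circ g$. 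By \ref{P4}, $\tilde F$ is a $4\theta$-barrier. Its gradient at $c$ is fixed by both reflections of $T_c\HH^2$, so it vanishes, and $c$ is the analytic center. Moreover, $(\nabla^2\tilde F)_c$ is invariant under both reflections, hence diagonal in the orthonormal basis $e_1,e_2$ along the two axes: $(\nabla^2\tilde F)_c=\operatorname{diag}(\lambda_1,\lambda_2)$.

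Unlike the case $4\mid n$ with rotations, $\lambda_1$ and $\lambda_2$ need not be equal. We therefore look for two directions $u,v$, each making an angle at most $\pi/4$ or so with a common axis, say $e_1$. Then $\lambda_1/2\le(\nabla^2\tilde F)_c[w,w]\le\lambda_1+\lambda_2$ for unit $w\in\{u,v\}$, which is still not comparable. A better route is to take $u$ and $v$ to be reflections of each other across an axis, or symmetric about the bisector $e_1+e_2$ of the axes. Then $(\nabla^2\tilde F)_c[u,u]=\cos^2\alpha\,\lambda_1+\sin^2\alpha\,\lambda_2$ and $(\nabla^2\tilde F)_c[v,v]=\cos^2\beta\,\lambda_1+\sin^2\beta\,\lambda_2$, with $|\alpha-\beta|$ small. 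The ratio of these two quadratic forms is then at most $\max(\cos^2\alpha/\cos^2\beta,\sin^2\alpha/\sin^2\beta)$, and we want this bounded by $4$.

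Concretely, we choose $u\propto\log_c(p_j)$, pointing to a vertex at distance $R_n$, and $v\propto\log_c(q')$, pointing to the midpoint of the adjacent edge $p_{j-1}p_j$ at distance $r_n$, with $j=\lceil n/8\rceil+1$ as in the figure. These directions differ by angle $\pi/n$. With $j$ so chosen, both lie at angle roughly between $\pi/8$ and $\pi/4$ from the axis $e_1$ through $p_1$, up to $O(1/n)$ corrections. This keeps $\cos$ and $\sin$ of both angles bounded away from $0$, and the ratio of the sines of the two angles is at most $2$ because the angles differ by $\pi/n\le\pi/6$. Hence $(\nabla^2\tilde F)_c[v,v]\le 4(\nabla^2\tilde F)_c[u,u]$.

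We then conclude as in \cref{prop:4dividesn}. Property \ref{P3} gives $R_n^2(\nabla^2\tilde F)_c[u,u]\le(8\theta+1)^2$. Property \ref{P2}, applied to $\log_c(q')$ with $q'\notin D$, gives $r_n^2(\nabla^2\tilde F)_c[v,v]\ge1$. Combining these,
\[
(8\theta+1)^2\ge R_n^2(\nabla^2\tilde F)_c[u,u]\ge\tfrac14 R_n^2(\nabla^2\tilde F)_c[v,v]\ge\tfrac{R_n^2}{4r_n^2},
\]
so $8\theta+1\ge R_n/(2r_n)$. \Cref{lem:boundsonRr} then yields $R_n/(2r_n)\ge L/(4\log n)$. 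The main obstacle is the angular bookkeeping: verifying, for every even $n\ge6$ in both residue classes mod $4$, that the chosen $j$ places $u$ and $v$ in an angular window where the factor-$4$ comparison of the diagonal quadratic form holds. This is checked separately for small $n$ such as $6$ and $10$, and asymptotically for large $n$.
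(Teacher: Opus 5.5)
Your architecture matches the paper's. You symmetrize over the two orthogonal reflections (through $p_1$ and through $q$, which for $4\mid n$ is the vertex $p_{n/4+1}$), so that $c$ is the analytic center of the $4\theta$-barrier $\tilde F$ and $(\nabla^2\tilde F)_c=\operatorname{diag}(\lambda_1,\lambda_2)$. You take $u$ toward $p_j$ and $v$ toward $q'$ with $j=\lceil n/8\rceil+1$, and conclude with \ref{P3}, \ref{P2} and the bounds $R_n\ge L/2$, $r_n\le\log n$. The gap is the only non-routine step, $(\nabla^2\tilde F)_c[v,v]\le4(\nabla^2\tilde F)_c[u,u]$. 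You explicitly defer it, and the justification you sketch is wrong in three places:
\begin{enumerate}
\item Measured from $e_1$, the angles are $\alpha=\frac{2\pi}{n}\lceil n/8\rceil\ge\pi/4$ for $u$ and $\beta=\alpha-\pi/n$ for $v$. For example, $\alpha=\pi/3$ at $n=6$ and $\alpha=2\pi/5$ at $n=10$; they do not lie in a window around $[\pi/8,\pi/4]$.
\item The mediant bound you wrote controls $[u,u]/[v,v]$. You need the other direction, $[v,v]/[u,u]\le\max(\cos^2\beta/\cos^2\alpha,\,\sin^2\beta/\sin^2\alpha)$.
\item Since $0<\beta<\alpha<\pi/2$, the sine ratio is trivially at most $1$ (this is the paper's first estimate). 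The binding quantity is the cosine ratio $\cos\beta/\cos\alpha$. ``Bounded away from zero and differing by $\pi/n\le\pi/6$'' does not give the constant $2$: at $n=10$ the ratio is $\cos(3\pi/10)/\cos(2\pi/5)\approx1.90$. So the factor $4$ is nearly tight and must be verified for every $n$, not just asymptotically.
\end{enumerate}

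The paper supplies exactly this estimate. For $n=6,8,10$ it computes $\cos^2\alpha/\cos^2\beta$ directly ($1/3$, about $0.59$, about $0.28$). For $n\ge12$ it uses $\pi/4\le\alpha\le\pi/4+2\pi/n\le5\pi/12$, hence $\sin\alpha\le C\cos\alpha$ with $C=\tan(5\pi/12)$. This gives $\cos(\alpha-\pi/n)\le\cos\alpha\,\bigl(\cos(\pi/n)+C\sin(\pi/n)\bigr)$, where the squared bracket is decreasing in $n$ and equals $C=2+\sqrt3<4$ at $n=12$. With that estimate inserted and the direction of the ratio corrected, your argument is complete. Invoking the rotation argument when $4\mid n$ is valid, since it yields the stronger bound $8\theta+1\ge R_n/r_n$. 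The case $n\equiv2\pmod 4$ still needs the estimate above.
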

\begin{proof}
    Let~$c$ be the center of~$D$.
    Let~$s_1$ be the reflection of~$\HH^2$ which preserves the geodesic passing through~$c$ and~$p_1$ (i.e., the geodesic through $c$ and $p_1$ is invariant under the reflection).
    Let~$q$ be the midpoint of~$p_{\lfloor n/4 \rfloor + 1}$ and~$p_{\lceil n/4 \rceil + 1}$.
    Let~$s_2$ be the reflection of~$\HH^2$ which preserves the geodesic passing through~$c$ and~$q$.
    The reflections~$s_1$ and $s_2$ are orthogonal (as $\angle_c(p_1, q) = \pi/2$), hence they commute.
    Moreover both $s_1$ and $s_2$ restrict to isometries on $D$.

    Next, define
    \[
        \tilde F(x) = \sum_{b_1, b_2 = 0}^1 F(s_1^{b_1} s_2^{b_2} x)
    \]
    which is a $4\theta$-barrier for $D$ by \ref{P4}, and invariant under both reflections.
    The only common fixed point of the reflections is $c$, hence $c$ is the analytic center of $D$ with respect to $\tilde F$.

    The induced action on $T_c \HH^2$ leaves $(\nabla^2 \tilde F)_c$ invariant.
    Letting $w, w' \in T_c \HH^2$ denote the unit speed direction from~$c$ to~$p_1$ and~$q$, respectively, we see that $(\nabla^2 \tilde F)_c$ is represented by a matrix
    \[
        \begin{bmatrix}
            \lambda & 0 \\
            0 & \lambda'
        \end{bmatrix}
    \]
    in the orthonormal basis~$w, w'$.

    We will now use that~$n \geq 6$. Set $j = \lceil n/8 \rceil + 1$.
    Let~$q'$ be the midpoint of~$p_j$ and~$p_{j-1}$.
    We prove two estimates on the angles appearing in the $n$-gon, and then wrap up the proof.

\paragraph{First estimate:} Note that~$\angle_c(q, p_j) + \frac{\pi}{n} = \angle_c(q, q') \leq \frac{\pi}{2}$.
    As the cosine is decreasing on~$[0,\pi]$, we conclude that~$0 \leq \cos(\angle_c(q, q')) \leq \cos(\angle_c(q, p_j))$.

\paragraph{Second estimate:} Next, we show that~$\cos(\angle_c(p_1,p_j))^2 \geq \frac{1}{4} \cos(\angle_c(p_1, q'))^2$.
    For~$n \leq 10$, we verify manually:
    \begin{center}
        \begin{tabular}{c|c|c|c}
        $n$ & $\angle_c(p_1,p_j)$ & $\angle_c(p_1,q')$ & $\frac{\cos(\angle_c(p_1,p_j))^2}{\cos(\angle_c(p_1,q'))^2}$ \\
        \hline
        $6$ &
        $\pi/3$ & %$\frac{\pi}{3}$ &
        $\pi/6$ & %$\frac{\pi}{6}$ &
        $1/3$ \\%$\frac{1}{3}$ \\
        $8$ &
        $\pi/4$ & %$\frac{\pi}{4}$ &
        $\pi/8$ & %$\frac{\pi}{8}$ &
        $\approx 0.58$ \\
        $10$ &
        $2\pi/5$ & % $\frac{2\pi}{5}$ &
        $3\pi/10$ & % $\frac{3\pi}{10}$ &
        $\approx 0.28$
        \end{tabular}
        \end{center}
    % \begin{align*}
    %     n = 6: && \angle_c(p_1,p_j) = \frac{\pi}{3} && \angle_c(p_1,q') = \frac{\pi}{6} && \frac{\cos(\angle_c(p_1,p_j))^2}{\cos(\angle_c(p_1,q'))^2} = \frac{1}{3} \\
    %     n = 8: && \angle_c(p_1,p_j) = \frac{\pi}{4} && \angle_c(p_1,q') = \frac{\pi}{8} && \frac{\cos(\angle_c(p_1,p_j))^2}{\cos(\angle_c(p_1,q'))^2} \approx 0.58 \\
    %     n = 10: && \angle_c(p_1,p_j) = \frac{2\pi}{5} && \angle_c(p_1,q') =
    %     %\frac{2\pi}{5} - \frac{2\pi}{20} =
    %     \frac{3\pi}{10} && \frac{\cos(\angle_c(p_1,p_j))^2}{\cos(\angle_c(p_1,q'))^2} \approx 0.28.
    % \end{align*}
    For~$n \geq 12$, set $\alpha = \angle_c(p_1,p_j) = \angle_c(p_1,q') + \frac{\pi}{n}$.
    Then
    \[
    \frac{\pi}{4} \leq \alpha \leq \frac{2\pi}{n} \Big(\frac{n}{8} + 1\Big) \leq \frac{\pi}{4} + \frac{2\pi}{n} \leq \frac{\pi}{4} + \frac{\pi}{6}.
    \]
    Now~$\sin(x) \leq C \cos(x)$ for~$x \leq \frac{\pi}{4} + \frac{\pi}{6} = \frac{5\pi}{12}$ with~$C = \tan(\frac{5\pi}{12}) \approx 3.73$.
    Therefore,
    \begin{align*}
    \cos(\alpha - \pi / n)^2 &= (\cos(\alpha) \cos(\pi/n) + \sin(\alpha) \sin(\pi / n))^2 \\
    &\leq (\cos(\alpha) \cos(\pi/n) + C \cos(\alpha) \sin(\pi / n))^2 \\
    & = \cos(\alpha)^2 (\cos(\pi/n) + C \sin(\pi / n))^2
    \leq 4 \cos(\alpha)^2.
    \end{align*}
    The last inequality may be shown by observing that~$(\cos(\pi/n) + C \sin(\pi/n))^2$ is decreasing as a function of~$n$, and equals $C = \tan(\frac{5\pi}{12}) \leq 3.74$ for~$n = 12$.
    Therefore we have verified for all~$n\geq 6$ that~$\cos(\angle_c(p_1,p_j))^2 \geq \frac14 \cos(\angle_c(p_1,q'))^2$.
    % then we also have
    % \[
    %     \frac{\pi}{4} - \frac{\pi}{n} = \frac{2\pi}{8} - \frac{2\pi}{2n} \leq \angle_c(p_1, q') = \frac{2\pi}{n} \left\lceil \frac{n}{8} \right\rceil + \frac{\pi}{n} \leq \frac{2\pi}{8} + \frac{2\pi}{2n} = \frac{\pi}{4} - \frac{\pi}{n}.
    % \]
    % This implies that
    % \[
    %     3 \geq \frac{\cos(\pi/4)}{\cos(\pi/4 + \pi/n)} \geq
    %     \frac{\cos(\angle_c(p_1, p_j))}{\cos(\angle_c(p_1, q'))} \geq \frac{\cos(\pi/4 + 2\pi/n)}{\cos(\pi/4 - \pi/n)}
    % \]
    % Moreover observe that~$\angle_c(p_1, p_j) = \frac{\pi}{4} - \angle_c(q, p_j)$ and similar for~$q'$.

\paragraph{Wrapping up:}
Let~$\lambda,\lambda'$ be the eigenvalues of~$(\nabla^2 \tilde F)_c$, and~$w,w'$ the associated eigenvectors.
    Then, letting~$u, v$ be the unit-speed directions from $c$ to $p_j$ and~$q'$ respectively, we see that
    \begin{align*}
        (\nabla^2 \tilde F)_c[u,u] & = \lambda_1 \langle u, w_1 \rangle^2 + \lambda_2 \langle u, w_2 \rangle^2 \\
        & = \lambda_1 \cos(\angle_c(p_1, p_j))^2 + \lambda_2 \cos(\angle_c(q, p_j))^2 \\
        & \geq \frac{\lambda_1}{4} \cos(\angle_c(p_1, q'))^2 + \lambda_2 \cos(\angle_c(q, q'))^2 \\
        & \geq \frac14 \left(\lambda_1 \cos(\angle_c(p_1, q'))^2 + \lambda_2 \cos(\angle_c(q, q'))^2\right)
        = \frac14 (\nabla^2 \tilde F)_c[v,v].
    \end{align*}
    But~$R_n^2 (\nabla^2 \tilde F)_c[u,u] = (\nabla^2 \tilde F)_c[\log_c(p_j), \log_c(p_j)] \leq (8 \theta + 1)^2$ and
    \[
    r_n^2 (\nabla^2 \tilde F)_c[v,v] = (\nabla^2 \tilde F)_c[\log_c(q'), \log_c(q')] \geq 1,
    \]
    by \ref{P3} and \ref{P2} respectively.
    Therefore
    \begin{align*}
        (8 \theta + 1)^2 \geq R_n^2 (\nabla^2 \tilde F)_c[u,u]
        = \frac{R_n^2}{r_n^2} \cdot r_n^2 (\nabla^2 \tilde F)_c[u,u]
        \geq \frac{R_n^2}{4 r_n^2} \cdot r_n^2 (\nabla^2 \tilde F)_c[v,v]
        \geq \frac{R_n^2}{4 r_n^2}.
    \end{align*}
    Lastly, Lemma~\ref{lem:boundsonRr} gives $R_n/(2 r_n) \geq \frac{1}{4} L / \log(n)$.
    \end{proof}

\bibliography{references}

\end{document}

%% file: Fig4_hexagon.tex
\begin{tikzpicture}[scale=2]

% Define hexagon vertices
\coordinate (P2) at (-30:1);
\coordinate (P3) at (30:1);
\coordinate (P4) at (90:1);
\coordinate (P5) at (150:1);
\coordinate (P6) at (210:1);
\coordinate (P1) at (270:1);

% Draw hexagon
\draw[very thick] (P1) -- (P2) -- (P3) -- (P4) -- (P5) -- (P6) -- cycle;

% Label vertices
\node[below=1pt] at (P1) {\( p_1 \)};
\node[right=1pt] at (P2) {\( p_2 \)};
\node[right=1pt] at (P3) {\( p_3 \)};
\node[above=1pt] at (P4) {\( p_4 \)};
\node[left=1pt] at (P5) {\( p_5 \)};
\node[left=1pt] at (P6) {\( p_6 \)};

% Define and draw points q and q'
\path let \p1 = (P2), \p2 = (P3) in
  coordinate (Q) at ($(\p1)!0.5!(\p2)$);
\path let \p1 = (P1), \p2 = (P2) in
  coordinate (Q') at ($(\p1)!0.5!(\p2)$);

\filldraw[black] (Q) circle (1pt);
\filldraw[black] (Q') circle (1pt);
\filldraw[black] (0,0) circle (1pt);

% Label q and q'
\node[right=2pt] at (Q) {\( q \)};
\node[right=2pt] at (Q') {\( q' \)};
\node[above right=2pt] at (0,0) {\( c \)};

\coordinate (PJ) at (P2);
\draw[dashed] (0,0) -- (Q');
\draw[dashed] (0,0) -- (PJ);
\node[above right] at ($(0,0)!0.5!(PJ)$) {$u$};
\node[below left] at ($(0,0)!0.5!(Q')$) {$v$};

\end{tikzpicture}

%% file: Fig4_8gon.tex
\begin{tikzpicture}[scale=2]

% Define hexagon vertices
\coordinate (P1) at (270:1);
\coordinate (P2) at (315:1);
\coordinate (P3) at (0:1);
\coordinate (P4) at (45:1);
\coordinate (P5) at (90:1);
\coordinate (P6) at (135:1);
\coordinate (P7) at (180:1);
\coordinate (P8) at (235:1);

% Draw hexagon
\draw[very thick] (P1) -- (P2) -- (P3) -- (P4) -- (P5) -- (P6) -- (P7) -- (P8) -- cycle;

% Label vertices
\node[below=1pt] at (P1) {\( p_1 \)};
\node[below right=1pt] at (P2) {\( p_2 \)};
\node[right=1pt] at (P3) {\( p_3 = q \)};
\node[above right=1pt] at (P4) {\( p_4 \)};
\node[above=1pt] at (P5) {\( p_5 \)};
\node[above left=1pt] at (P6) {\( p_6 \)};
\node[left=1pt] at (P7) {\( p_7 \)};
\node[below left=1pt] at (P8) {\( p_8 \)};

% Define and draw points q and q'
\path let \p1 = (P2), \p2 = (P3) in
  coordinate (Q) at (P3);
\path let \p1 = (P1), \p2 = (P2) in
  coordinate (Q') at ($(\p1)!0.5!(\p2)$);
\coordinate (PJ) at (P2);

\filldraw[black] (Q) circle (1pt);
\filldraw[black] (Q') circle (1pt);
\filldraw[black] (0,0) circle (1pt);

% Label q and q'
% \node[right=2pt] at (Q) {\( q \)};
\node[below right=2pt] at (Q') {\( q' \)};
\node[above right=2pt] at (0,0) {\( c \)};

\draw[dashed] (0,0) -- (Q');
\draw[dashed] (0,0) -- (P2);
% \draw[dashed] (0,0) -- node {$v$} -- (P2);
\node[right] at ($(0,0)!0.5!(PJ)$) {$u$};
\node[left] at ($(0,0)!0.5!(Q')$) {$v$};

\end{tikzpicture}

%% file: Fig4_10gon.tex
\begin{tikzpicture}[scale=2]

% Define hexagon vertices
\coordinate (P1) at (270:1);
\coordinate (P2) at (306:1);
\coordinate (P3) at (342:1);
\coordinate (P4) at (18:1);
\coordinate (P5) at (54:1);
\coordinate (P6) at (90:1);
\coordinate (P7) at (126:1);
\coordinate (P8) at (162:1);
\coordinate (P9) at (198:1);
\coordinate (P10) at (234:1);

% Draw hexagon
\draw[very thick] (P1) -- (P2) -- (P3) -- (P4) -- (P5) -- (P6) -- (P7) -- (P8) -- (P9) -- (P10) -- cycle;

% Label vertices
\node[below=1pt] at (P1) {\( p_1 \)};
\node[below right=1pt] at (P2) {\( p_2 \)};
\node[right=1pt] at (P3) {\( p_3 \)};
\node[right=1pt] at (P4) {\( p_4 \)};
\node[above right=1pt] at (P5) {\( p_5 \)};
\node[above=1pt] at (P6) {\( p_6 \)};
\node[above left=1pt] at (P7) {\( p_7 \)};
\node[left=1pt] at (P8) {\( p_8 \)};
\node[left=1pt] at (P9) {\( p_9 \)};
\node[below left=1pt] at (P10) {\( p_{10} \)};

% Define and draw points q and q'
\path let \p1 = (P3), \p2 = (P4) in
  coordinate (Q) at ($(\p1)!0.5!(\p2)$);
\path let \p1 = (P2), \p2 = (P3) in
  coordinate (Q') at ($(\p1)!0.5!(\p2)$);

\coordinate (PJ) at (P3);

\filldraw[black] (Q) circle (1pt);
\filldraw[black] (Q') circle (1pt);
\filldraw[black] (0,0) circle (1pt);

% Label q and q'
\node[right=2pt] at (Q) {\( q \)};
\node[right=2pt] at (Q') {\( q' \)};
\node[above right=2pt] at (0,0) {\( c \)};

\draw[dashed] (0,0) -- (Q');
\draw[dashed] (0,0) -- (PJ);
% \draw[dashed] (0,0) -- node {$v$} -- (P2);
\node[above] at ($(0,0)!0.5!(PJ)$) {$u$};
\node[below] at ($(0,0)!0.5!(Q')$) {$v$};

\end{tikzpicture}